\numberwithin{equation}{section}
\newtheorem{theorem}{Theorem}[section]
\newtheorem{lemma}[theorem]{Lemma}
\newtheorem{claim}{Claim}[theorem]
\newtheorem{proposition}[theorem]{Proposition}
\newtheorem{problem}[theorem]{Problem}
\theoremstyle{definition}
\newtheorem{definition}[theorem]{Definition}
\theoremstyle{remark}
\newcommand{\mc}[1]{\mathcal{#1}}
\newcommand{\mf}[1]{\mathfrak{#1}}
\newcommand{\setm}{\setminus}
\newcommand{\empt}{\emptyset}
\newcommand{\subs}{\subset}
\newcommand{\dom}{\operatorname{dom}}
\def\<{\left\langle}
\def\>{\right\rangle}
\def\br#1;#2;{\bigl[ {#1} \bigr]^ {#2} }
\newcommand{\pind}{\operatorname{pd}}
\newcommand{\den}{\operatorname{d}}
\DeclareMathOperator{\pp}{pp}
\DeclareMathOperator{\mDelta}{\Delta}
\DeclareMathOperator{\cf}{cf}
\DeclareMathOperator{\we}{w}
\newcommand{\inte}[1]{{\mathbb I}_{#1}}
\author[I. Juh\'asz]{Istv\'an Juh\'asz}
\address      { Alfréd Rényi Institute of Mathematics, Hungarian Academy of Sciences }
\email{juhasz@renyi.hu}
\author[L. Soukup]{Lajos Soukup}
\thanks
  {
   }
\address
      { Alfr{\'e}d R{\'e}nyi Institute of Mathematics, Hungarian Academy of Sciences }
\email{soukup@renyi.hu}
\author[Z. Szentmikl\'ossy]{Zolt\'an Szentmikl\'ossy}
\address{E\"otv\"os University of Budapest}
\email{szentmiklossyz@gmail.com
}
\subjclass[2010]{03E04,03E10,03E35,54A25,54A35}
\keywords{pinning down, density, cardinal arithmetic, PCF theory, $\pp(\lambda),\,\Delta(X)$}
\title[Pinning Down vs Density]
   {Pinning Down versus Density}
\date{\today}
\thanks{The research on and preparation of this paper was
supported by  OTKA grant no. K113047.}
\begin{document}

\begin{abstract}
The {\em pinning down number}  $ \pind(X)$ of a topological space $X$ is
the smallest cardinal $\kappa$ such that for any
neighborhood assignment
$U:X\to \tau_X$ there is a set $A\in \br X;\kappa;$
with  $A\cap U(x)\ne\empt$
for all $x\in X$. Clearly, c$(X) \le \pind(X) \le \den(X)$.

Here we prove that
the following statements are equivalent:
\begin{enumerate}[(1)]
\item
$2^\kappa<\kappa^{+\omega}$
for each  cardinal $\kappa$;
\item $\den(X)=\pind(X)$ for each Hausdorff space $X$;
\item $\den(X)=\pind(X)$ for each 0-dimensional Hausdorff space $X$.
\end{enumerate}
This answers two questions of Banakh and  Ravsky.

The {\em dispersion character $\Delta(X)$} of  a space $X$
is the smallest cardinality of a non-empty open subset of $X$.
We also show that if $\pind(X)<\den(X)$ then $X$ has an open subspace $Y$
with $\pind(Y)<\den(Y)$ and $|Y| = \Delta(Y)$, moreover
the following three statements are {\em equiconsistent}:
\begin{enumerate}[(i)]
\item There is a singular cardinal $\lambda$ with $\pp(\lambda)>\lambda^+$,
i.e. Shelah's Strong Hypothesis fails;
\item there is a 0-dimensional Hausdorff space $X$ such that $|X|=\Delta(X)$
is a regular cardinal and $\pind(X)<\den(X)$;
\item there is a topological space $X$ such that $|X|=\Delta(X)$
is a regular cardinal and $\pind(X)<\den(X)$.
\end{enumerate}
We also prove   that
\begin{itemize}
\item $\den(X)=\pind(X)$ for any locally compact Hausdorff space $X$;
\smallskip
\item for every Hausdorff space $X$ we have $|X|\le 2^{2^{\pind(X)}}$ and
$\pind(X)<\den(X)$ implies $\Delta(X)< 2^{2^{\pind(X)}}$;
\smallskip
\item for every regular space $X$ we have $\min\{\Delta(X),\, \we(X)\}\le 2^{\pind(X)}\,$ and
$\den(X)<2^{\pind(X)},\,$ moreover
$\pind(X)<\den(X)$ implies $\,\Delta(X)< {2^{\pind(X)}}$.
\end{itemize}
\end{abstract}

\maketitle

\section{Introduction}

\begin{definition}
Let $X$ be a topological space.
We say that $A\subs X$ {\em pins down}
a neighborhood assignment
$U:X\to \tau_X$ iff $A\cap U(x)\ne\empt$
for all $x\in X$.
The {\em pinning down number}  $ \pind(X)$ of $X$ is the smallest cardinal $\kappa$ such that
every neighborhood assignment on $X$ can be pinned down by a set of size
$\kappa$.
\end{definition}
Clearly, for any space $X$ we have c$(X) \le \pind(X) \le \den(X)$.

\medskip

The pinning down number has been recently introduced in  \cite{Ba}
under the name ``{\em foredensity}'' and it was denoted there by $\ell^-(X)$.
The following two interesting results concerning the pinning down number were also established in \cite{Ba}:

\begin{itemize}
\item
\cite[Theorem 5.2]{Ba}
If $|X|<\aleph_\omega$ then $\pind(X)=\den(X)$.

\smallskip

\item
\cite[Corollary 5.4]{Ba}
If ${\kappa}$ is any singular cardinal then
there is a $T_1$ semitopological group $X$ such that $$\pind(X)=\cf({\kappa})
< \kappa = \den(X)=|X|= \Delta(X).$$ Moreover, if ${\kappa} < 2^{2^{\cf({\kappa})}}$ then
$X$ is even Hausdorff and totally disconnected.
\end{itemize}

The following two natural problems were then raised in \cite{Ba}:

\begin{itemize}
\item
\cite[Problem 5.5]{Ba}
Is there a ZFC example of a Hausdorff space $X$ with
$\pind(X) < \den(X)$?
\item
\cite[Problem 5.6]{Ba}
Is it consistent to have a regular space $X$ with $\pind(X) < \den(X)$?
\end{itemize}

\medskip
Our next result completely settles both of these problems.

\begin{theorem}\label{tm:main1}
The following three statements are equivalent:
\begin{enumerate}[(1)]
\item $2^\kappa<\kappa^{+\omega}$ for each
cardinal  $\kappa$;
\item $\den(X)=\pind(X)$ for every Hausdorff space $X$;
\item $\den(X)=\pind(X)$ for every 0-dimensional Hausdorff space $X$.
\end{enumerate}
\end{theorem}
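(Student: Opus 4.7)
The plan is to establish the cycle $(2) \Rightarrow (3) \Rightarrow (1) \Rightarrow (2)$; the first implication is immediate, so I'll spend my energy on the other two.

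For $(3) \Rightarrow (1)$, I would argue contrapositively. Assuming (1) fails, fix $\kappa$ with $2^\kappa \geq \kappa^{+\omega}$ and aim to construct a 0-dimensional Hausdorff $X$ with $\pind(X) < \den(X)$. The strategy is to apply Banakh's Corollary 5.4 (quoted in the excerpt), which produces such a space whenever there is a singular cardinal $\mu$ with $\mu < 2^{2^{\cf(\mu)}}$. Under the failure of (1) such a $\mu$ exists: the natural candidate is $\mu = \kappa^{+\omega}$, singular of cofinality $\omega$, with $\mu \leq 2^\kappa$ (sharpened to strict inequality by K\"onig's theorem). The hypothesis $\mu < 2^{2^\omega}$ is verified by a short case split on whether $\kappa \leq 2^\omega$ (giving $\mu < 2^\kappa \leq 2^{2^\omega}$ directly) or $\kappa > 2^\omega$ (where one replaces $\mu$ by a singular cardinal of cofinality $\theta$, for an appropriate regular $\theta$ with $\kappa \leq 2^\theta$, whose existence is itself implied by the failure of (1)). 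If a uniform argument is preferred, one can alternatively refine Banakh's construction inside $2^\kappa$ to bypass the $\mu < 2^{2^{\cf(\mu)}}$ hypothesis altogether.

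For the main implication $(1) \Rightarrow (2)$, assume (1) and, for contradiction, let $X$ be Hausdorff with $\pind(X) = \kappa < \den(X)$. First apply the structural reduction announced in the abstract to pass to an open subspace $Y \subseteq X$ with $|Y| = \mDelta(Y) = \den(Y) > \kappa$ and $\pind(Y) \leq \kappa$. The Hausdorff cardinality bound $|Y| \leq 2^{2^\kappa}$ pins $|Y|$ into a bounded interval above $\kappa$. The technical heart is a PCF-style analysis: condition (1) implies $\pp(\mu) < \mu^{+\omega}$ for every singular $\mu$, a weak form of Shelah's Strong Hypothesis strong enough to prevent the cofinal witnessing structure in $|Y|$ that would be needed to keep a size-$\kappa$ pinning-down set from being dense. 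From this one extracts a dense subset of $Y$ of size $\kappa$, contradicting $\den(Y) > \kappa$.

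The hard part will be the PCF step in $(1) \Rightarrow (2)$: translating the cardinal-arithmetic bound $\pp(\mu) < \mu^{+\omega}$ into a topological density statement. This should involve selecting a cofinal scale inside $|Y|$ (or in an ultraproduct of regulars with $\pp$-supremum $|Y|$) and using the $\pind(Y) \leq \kappa$ hypothesis to realize the scale elements via basic open sets pinning down all of $Y$; the scale then delivers a dense set of size $\kappa$. The structural reduction to $|Y| = \mDelta(Y)$ and the Hausdorff cardinality bound will be invoked as black boxes from the surrounding sections.
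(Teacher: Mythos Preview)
Your plan for $(1)\Rightarrow(2)$ has a genuine gap: the ``PCF-style analysis'' you sketch is both too vague to be a proof and not how the argument actually goes. The paper's proof is entirely elementary and uses no $\pp$ or scales. Under (1), every cardinal lies in an interval $[\mu,\mu^{+\omega})$ for some strong limit $\mu$: take $\mu=\beth_\alpha$ maximal with $\beth_\alpha\le|Y|$; then $|Y|<\beth_{\alpha+1}=2^\mu<\mu^{+\omega}$. Having reduced to a neat Hausdorff $Y$ with $\mu\le|Y|=\mDelta(Y)<\mu^{+\omega}$, Pospi\v{s}il gives $\den(Y)\ge\mu$, and one splits into two cases. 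If $\den(Y)=\mu$, pass to a coarser Hausdorff topology of weight $\le|Y|$; this keeps $\den\ge\mu$ (since $\mu$ is strong limit) while making $\pi\le\mDelta$, so Lemma~\ref{lm:piDelta} forces $\pind=\den=\mu$ there, whence $\pind(Y)\ge\mu$. If $\den(Y)=\lambda^+>\mu$, then $|Y|=\lambda^{+m}$ for some finite $m$, and the elementary fact $\cf(\br\lambda^{+m};\lambda;,\subseteq)=\lambda^{+m}$ feeds directly into Lemma~\ref{lm:cfkappalambda1} to give $\pind(Y)\ge\lambda^+$. (Incidentally, your reduction asserts $|Y|=\mDelta(Y)=\den(Y)$; the neat-subspace lemma only delivers $|Y|=\mDelta(Y)$.)

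Your plan for $(3)\Rightarrow(1)$ also has problems. Banakh's Corollary~5.4, as quoted, produces under $\mu<2^{2^{\cf(\mu)}}$ a Hausdorff \emph{totally disconnected} space, not a 0-dimensional one, so it cannot directly refute (3). And your case split when $\kappa>2^\omega$ is not justified: the failure of (1) at a single $\kappa$ does not obviously manufacture a singular $\mu'$ of some cofinality $\theta$ with $\mu'<2^{2^\theta}$. The paper instead proves from scratch (Theorem~\ref{tm:posplus}) that for \emph{any} singular $\mu$ that is not strong limit there is a neat 0-dimensional Hausdorff space of size $\mu$ with $\pind<\den=\mu$; since $2^\kappa\ge\kappa^{+\omega}$ makes $\mu=\kappa^{+\omega}$ singular and not strong limit, this settles the implication at once. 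Your parenthetical ``alternatively refine Banakh's construction to bypass the hypothesis'' is in fact the entire content of this direction, not an afterthought.
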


We shall say that a topological space $X$ is {\em neat} iff
$X \ne \empt$ and $|X|=\Delta(X)$,
where the {\em dispersion character $\Delta(X)$} of $X$
is the smallest cardinality of a non-empty open subset of $X$.
In other words, $X$ is neat iff all non-empty open sets in $X$ have the same size.
We shall show in the next section that any space $X$ satisfying $\pind(X)<\den(X)$
has a neat open subspace $Y$ with $\pind(Y)<\den(Y)$.

The examples that Banakh and
Ravsky constructed in the proof of \cite[Corollary 5.4]{Ba},
as well as the examples we first constructed in our proof of theorem \ref{tm:main1}
were both neat and of singular cardinality. Hence it was natural for us to raise the
question if witnesses for $\pind(X)<\den(X)$ that are both neat and of regular cardinality
could also be found.

Before discussing our answer to this question,
we need to recall {\bf Shelah's Strong Hypothesis} which is the following statement:
\begin{align}\label{SSH}
\text{$\pp(\mu)=\mu^+$ for all singular cardinals $\mu$}.
\end{align}

Our next result gives an answer to the previous question that is complete up to consistency.

\begin{theorem}\label{tm:equicons}
The following statements
are equiconsistent:
\begin{enumerate}[(i)]
\item Shelah's Strong Hypothesis fails;

\item there is a neat 0-dimensional Hausdorff space $X$ of regular
cardinality with $\pind(X)<\den(X)$;

\item there is a neat topological space $X$ of regular cardinality with $\pind(X)<\den(X)$.
\end{enumerate}
\end{theorem}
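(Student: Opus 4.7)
The strategy I would adopt is to prove the cyclic implications $(i) \Rightarrow (ii) \Rightarrow (iii) \Rightarrow (i)$, which in fact yields logical equivalence and, in particular, equiconsistency. The step $(ii) \Rightarrow (iii)$ is immediate.

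For $(i) \Rightarrow (ii)$, assume Shelah's Strong Hypothesis fails and fix a singular $\lambda$ with $\pp(\lambda) > \lambda^+$. By PCF theory one obtains a regular $\theta$ with $\lambda^+ < \theta \le \pp(\lambda)$, a set $A$ of regular cardinals cofinal in $\lambda$ of order type $\cf(\lambda)$, an ultrafilter $D$ on $A$, and a $<_D$-increasing, $<_D$-cofinal scale $\langle f_\alpha : \alpha < \theta \rangle$ in $\prod A$. I would then adapt the singular-cardinal construction of \cite[Corollary 5.4]{Ba}, transplanting it from $\lambda$ to the regular $\theta$: the underlying set is $\theta$ and a clopen base is formed by cylinders $[s] = \{\alpha < \theta : s \subseteq f_\alpha\}$ for partial functions $s$ supported on initial segments of $A$. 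A one-point-per-coordinate diagonal choice pins down any neighborhood assignment built from this base, giving $\pind(X) \le |A| = \cf(\lambda) < \theta$, while $<_D$-cofinality of the scale forces $\Delta(X) = |X| = \den(X) = \theta$; the topology is 0-dimensional Hausdorff by design, and neatness follows because every non-empty basic open set inherits a cofinal sub-scale of length $\theta$.

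For the harder direction $(iii) \Rightarrow (i)$, I would argue contrapositively: assume SSH and suppose $X$ is a neat space of regular cardinality $\theta$ with $\pind(X) = \mu < \theta$. For a neighborhood assignment $U$ let $P$ be a pinning-down set of size $\mu$; the decomposition $X = \bigcup_{p \in P} X_p$ with $X_p = \{x : p \in U(x)\}$ then covers $X$ by $\mu < \theta$ pieces, so regularity of $\theta$ yields at least one piece of size $\theta$. Iterating this cover-and-refine procedure (choosing a new neighborhood assignment inside each large slice) produces a branching tree whose branches encode functions $f_\alpha \in \prod_{\kappa \in B} \kappa$ for a set $B$ of regular cardinals below $\theta$ whose supremum is some singular $\lambda$. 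The aim is to arrange that the resulting family is $<_D$-cofinal in $\prod B$ for a suitable ultrafilter $D$; this forces $\theta \le \pp(\lambda)$, and since one can ensure $\theta > \lambda^+$, this contradicts SSH.

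The main obstacle is clearly the scale extraction in $(iii) \Rightarrow (i)$. Without any separation axiom, the bound $|X| \le 2^{2^{\pind(X)}}$ established in the Hausdorff case is unavailable, so the only leverage is the interplay between neatness, regularity of $\theta$, and the pinning-down number itself. Organizing the iterated covering decomposition so that the ``slice cardinals'' have singular supremum $\lambda < \theta$ and their product has $D$-cofinality at least $\theta$---most likely via an elementary-submodel argument that chooses each successive neighborhood assignment generically against the previously selected pinning sets---is where the real work lies.
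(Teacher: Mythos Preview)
Your proposal has genuine gaps in both nontrivial directions, and your overall framing as a cycle of \emph{logical} implications misses a key point: the paper proves only \emph{equiconsistency}, precisely because the step from (i) to (ii) in general requires forcing.

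\medskip

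\textbf{On $(i)\Rightarrow(ii)$.} Your cylinder-set construction cannot work as stated. If the base consists of sets $[s]$ indexed by (finite, or bounded) partial functions $s$ with $\dom s\subseteq A$ and $s(\kappa)<\kappa<\lambda$, then there are at most $\lambda$ such $s$, so $\we(X)\le\lambda<\theta$. But then either $\den(X)\le\we(X)\le\lambda<\theta$, contradicting your claim $\den(X)=\theta$, or else $\Delta(X)=\theta>\pi(X)$, which by Lemma~\ref{lm:piDelta} forces $\pind(X)=\den(X)$---the opposite of what you want. The paper's construction (Theorem~\ref{tm:posplus}) avoids this by threading an \emph{independent family} $\mc A\subseteq[\lambda]^\lambda$ of size $2^\lambda$ through the basic open sets, which is why the hypothesis $\pp(\mu)\le 2^\lambda$ is essential there. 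When the singular $\mu$ witnessing $\pp(\mu)>\mu^+$ is strong limit, no $\lambda<\mu$ satisfies $2^\lambda\ge\pp(\mu)$, so Theorem~\ref{tm:posplus} does not apply directly; the paper instead forces with $\operatorname{Fn}(2^\mu\times\lambda,2;\lambda)$ to blow up $2^\lambda$ while preserving $\pp(\mu)>\mu^+$ (Lemma~\ref{lm:pp-preservation}), and only then invokes Theorem~\ref{tm:posplus} with $\rho=\mu^+$. This is exactly why the theorem asserts equiconsistency rather than equivalence.

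\medskip

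\textbf{On $(iii)\Rightarrow(i)$.} Your scale-extraction scheme is speculative and far more elaborate than what is needed. The paper's argument is short and does not construct any scale from $X$: by Lemma~\ref{lm:cfkappalambda1}, if $X$ is neat with $|X|=\Delta(X)=\kappa$ regular and $\pind(X)<\den(X)$, then $\cf\big([\kappa]^\lambda,\subseteq\big)>\kappa$ for some $\lambda<\den(X)$. A folklore fact (Proposition~\ref{pr:cf}) then yields a singular $\mu<\kappa$ with $\cf\big([\mu]^\lambda,\subseteq\big)>\mu^+$, and Shelah's result \cite[Lemma~8.2]{Sh400a} converts this into a failure of SSH. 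No iterated covering trees or elementary submodels are needed; the purely topological content ends at Lemma~\ref{lm:cfkappalambda1}, and the remaining work is off-the-shelf set theory.
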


We shall prove both theorems \ref{tm:main1} and \ref{tm:equicons} in section \ref{sc:pind}.

\medskip

In the last section of the paper we shall establish several interesting
inequalities involving the pinning down number. Perhaps the most interesting and
surprising of these is theorem \ref{tm:cardless22pdX} which states that
$|X|\le 2^{2^{\pind(X)}}$ holds
for every Hausdorff space $X$. This, of course, improves Pospi\v sil's classical
inequality $|X|\le 2^{2^{\den(X)}}$.

\section{Preliminary results}

In this section we present several rather simple results that, however, will be frequently used
in the proofs of our main results. We start with a proposition that describes the monotonicity
properties of $\pind(X)$. These are so obvious that we omit their proofs.

\begin{proposition}\label{prop:im}
\begin{enumerate}[(i)]
\item If $G$ is an open subspace of  $X$ then $\pind(G) \le \pind(X)$;

\item  if $f : X \to Y$ is a continuous onto map then $\pind(Y) \le \pind(X)$.
\end{enumerate}
\end{proposition}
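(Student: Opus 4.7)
For part (i), the plan is to lift a neighborhood assignment on $G$ to one on all of $X$, apply the hypothesis, and restrict. Given $V : G \to \tau_G$, I would define $U : X \to \tau_X$ by setting $U(x) = V(x)$ for $x \in G$ and $U(x) = X$ for $x \in X \setm G$; the first clause is legitimate precisely because $G$ is open in $X$, so $\tau_G \subs \tau_X$. By hypothesis there is $A \subs X$ with $|A| \le \pind(X)$ pinning down $U$. For every $x \in G$ we then have $A \cap V(x) \ne \empt$, and this intersection is automatically contained in $G$; hence $A \cap G$ already pins down $V$, giving $\pind(G) \le \pind(X)$.

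For part (ii), the plan is dual: pull back a neighborhood assignment on $Y$ through $f$, and push forward a pinning set. Given $V : Y \to \tau_Y$, define $U : X \to \tau_X$ by $U(x) = f^{-1}[V(f(x))]$, which contains $x$ and is open by continuity of $f$. Choose $A \subs X$ with $|A| \le \pind(X)$ pinning down $U$, and set $B = f[A]$, so $|B| \le \pind(X)$. For any $y \in Y$, surjectivity of $f$ gives some $x \in f^{-1}(y)$; then $A \cap U(x) \ne \empt$ yields $a \in A$ with $f(a) \in V(y)$, so $B \cap V(y) \ne \empt$. Thus $B$ pins down $V$ and $\pind(Y) \le \pind(X)$.

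There is no real obstacle here, which is why the authors omit the argument. The only points worth flagging are exactly the two hypotheses that get used: in (i) one needs $G$ \emph{open} (not merely a subspace) so that neighborhoods in $G$ are already neighborhoods in $X$, and in (ii) one needs $f$ \emph{onto} in the final verification, to guarantee that every $y \in Y$ is the image of some $x$ whose assigned neighborhood $U(x)$ is precisely $f^{-1}[V(y)]$. Continuity of $f$ is used only to ensure that the pulled-back assignment $U$ takes values in $\tau_X$.
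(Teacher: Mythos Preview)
Your argument is correct in both parts, and exactly the straightforward verification one would expect; the paper in fact omits the proof entirely, declaring the statements ``so obvious,'' so there is no alternative approach to compare against.
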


We now give the result that was promised in the introduction.

\begin{lemma}\label{lm:ex2cardhomex1}
If $\pind(X)<\den(X)$ then $X$ has a neat open subspace $Y$
with $\pind(Y)<\den(Y)$.
\end{lemma}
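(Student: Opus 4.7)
My plan is to peel off the low-density part of $X$ to obtain an open subspace on which every non-empty open set has large density, and then take an open subset of minimum cardinality inside that subspace.

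Set $\kappa := \pind(X)$. The first step is to analyse the union of all ``small'' open sets,
\[
G := \bigcup\{V \in \tau_X : \den(V) \le \kappa\},
\]
and to show that $\den(G) \le \kappa$. To prove this, I choose a maximal pairwise disjoint family $\mc F$ of non-empty open sets $V \subs X$ with $\den(V) \le \kappa$. Since $c(X) \le \pind(X) = \kappa$, we have $|\mc F| \le \kappa$. Next, $\bigcup \mc F$ is dense in $G$: if a non-empty open $U \subs G$ were disjoint from $\bigcup\mc F$, then picking $x\in U$ and a witness $W\ni x$ with $\den(W)\le\kappa$, the set $U\cap W$ would be a non-empty open subset of $W$, hence would satisfy $\den(U\cap W)\le\den(W)\le\kappa$ (density is monotone along open inclusions: if $D$ is dense in $W$ then $D\cap V'$ is dense in $V'$ for any open $V'\subs W$), contradicting maximality of $\mc F$. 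Fixing, for each $V\in\mc F$, a dense subset $D_V\subs V$ with $|D_V|\le\kappa$ and setting $D := \bigcup_{V\in\mc F}D_V$ then yields $|D|\le\kappa$ and $D$ dense in $G$.

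Because $\den(X) > \kappa \ge \den(G)$, the open set $G$ is not dense in $X$, so $X_0 := X\setm\overline{G}$ is a non-empty open subspace of $X$. Any non-empty open $V \subs X_0$ is open in $X$ and disjoint from $G$, hence satisfies $\den(V) > \kappa$ (otherwise $V$ would be absorbed into $G$). Finally I choose any non-empty open $Y\subs X_0$ with $|Y|=\mDelta(X_0)$, which exists by definition of the dispersion character. Every non-empty open subset of $Y$ is open in $X_0$, so has cardinality at least $\mDelta(X_0)=|Y|$; thus $Y$ is neat. Since $Y$ is open in $X$, Proposition~\ref{prop:im}(i) gives $\pind(Y)\le\pind(X)=\kappa$, while $\den(Y)>\kappa$ because $Y$ is a non-empty open subset of $X_0$. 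Therefore $\pind(Y)<\den(Y)$, as required.

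The chief delicate point is the estimate $\den(G)\le\kappa$: it hinges both on the monotonicity of density along open inclusions and on the cellularity bound $c(X)\le\pind(X)$ to keep $|\mc F|$ small. Once that estimate is in hand, everything downstream—the existence of $X_0$, the selection of $Y$ inside $X_0$, and the neatness of $Y$—is essentially automatic.
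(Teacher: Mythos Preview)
Your proof is correct and follows essentially the same strategy as the paper's: both use $c(X)\le\pind(X)$ to bound a maximal disjoint family and thereby show that the ``low-density'' open part cannot be dense in $X$, then extract a neat open set witnessing $\pind<\den$. The only difference is organizational---the paper first decomposes $X$ into disjoint neat open pieces and then selects one of density $>\pind(X)$, whereas you first peel off the region $G=\bigcup\{V:\den(V)\le\kappa\}$ and then pass to an open subset of minimum cardinality in $X\setm\overline{G}$.
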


\begin{proof}[Proof of Lemma  \ref{lm:ex2cardhomex1}]
Clearly, every non-empty open set in $X$ has a neat open subset, hence if
$\mc U$ is a maximal family of pairwise disjoint neat
open subsets   of $X$
then $\bigcup \mc U$ is dense open in $X$ and, consequently, $\den(\bigcup \mc U) = \den(X)$.
Let us put
\begin{align}\notag
\mathcal{V} = \{U\in \mc U: \den(U)\le \pind(X)\},
\end{align}
then $|\mathcal{V}| \le c(X)\le \pind(X)$ implies
$\den(\cup \mathcal{V})\le \pind(X)<\den(X)=\den(\bigcup \mc U)$,
and so $\mathcal{V} \ne \mathcal{U}$.
But every $Y\in \mc U \setm \mathcal{V}$ is neat open and, by definition, satisfies $\den(Y)>\pind(X)\ge \pind(Y)$.
\end{proof}

The basic idea of the following lemma goes back to \cite{Ba}.

\begin{lemma}\label{lm:cfkappalambda1}
Assume that $\lambda\le |X|=\Delta(X)=\kappa$.
If there is a family $\mc A\subs \br {\kappa};<\den(X);$ with
$|\mc A|={\kappa}$ such that
 \begin{align}\notag
\br {\kappa};<{\lambda};=\bigcup_{{A}\in \mc A}\br A;<{\lambda};
 \end{align}
then $\,\pind(X)\ge\lambda$. In particular, if for every cardinal $\mu < \den(X)$ we have
$\cf([\kappa]^\mu, \subs) = \kappa$ then $\pind(X) = \den(X)$.
\end{lemma}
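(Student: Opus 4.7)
I plan to construct, directly from the hypothesised family $\mc A$, a neighborhood assignment $U$ on $X$ that no set of size less than $\lambda$ can pin down. After identifying $X$ with $\kappa$, my first step attaches to each $A\in\mc A$ a witnessing open set: since $|A|<\den(X)$, the set $A$ is not dense in $X$, so some non-empty open $V_A$ satisfies $V_A\cap A=\empt$; by $\Delta(X)=\kappa$ we automatically have $|V_A|=\kappa$.

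Next, I will enumerate $\mc A=\{A_\alpha:\alpha<\kappa\}$ and pick distinct points $y_\alpha\in V_{A_\alpha}$ by a routine transfinite recursion --- at stage $\alpha$ only $|\alpha|<\kappa$ points have been used, while $|V_{A_\alpha}|=\kappa$. Setting $U(y_\alpha):=V_{A_\alpha}$ and $U(x):=X$ for every other $x\in X$ defines a neighborhood assignment. To see that no $B\subs X$ with $|B|<\lambda$ pins it down, I will invoke the covering hypothesis to find $\alpha<\kappa$ with $B\subs A_\alpha$; then $B\cap U(y_\alpha)\subs A_\alpha\cap V_{A_\alpha}=\empt$, so $B$ misses the neighborhood $U(y_\alpha)$, yielding $\pind(X)\ge\lambda$.

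For the "in particular" clause, using the general bound $\pind(X)\le\den(X)$, it will suffice to produce a family $\mc A$ fulfilling the hypothesis at $\lambda:=\den(X)$. I will take a cofinal family $\mc F_\mu\subs\br\kappa;\mu;$ of size $\kappa$ for each cardinal $\mu<\den(X)$ and union them; the total size is at most $\den(X)\cdot\kappa=\kappa$ (since $\den(X)\le|X|=\kappa$), and each $B\in\br\kappa;<\den(X);$ has some size $\mu<\den(X)$ and hence sits inside a member of $\mc F_\mu$.

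The main point requiring care is the inductive choice of pairwise distinct witnesses $y_\alpha$, but this is essentially forced by $|V_{A_\alpha}|=\kappa$ outrunning the cardinality of the points already chosen, so I do not expect a genuine obstacle. The successor-versus-limit distinction for $\den(X)$ in the "in particular" clause is handled uniformly by the union-of-cofinal-families trick, so no separate case analysis is needed.
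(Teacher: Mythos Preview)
Your proof is correct and follows essentially the same route as the paper: identify $X$ with $\kappa$, use non-density of each $A_\alpha$ together with $\Delta(X)=\kappa$ to recursively pick distinct witnesses in open sets missing $A_\alpha$, and conclude that any $B\in[\kappa]^{<\lambda}$ fails to pin down the resulting neighborhood assignment via the covering hypothesis. The only cosmetic difference is that the paper uses the specific open set $X\setminus\overline{A_\nu}$ where you pick an arbitrary $V_{A_\alpha}$ disjoint from $A_\alpha$; for the ``in particular'' clause the paper applies the first part once for each $\mu<\den(X)$ with $\lambda=\mu^+$, whereas you combine the cofinal families into a single $\mc A$ and invoke the first part once with $\lambda=\den(X)$ --- both arguments are valid and equally short.
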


\begin{proof}[Proof of Lemma  \ref{lm:cfkappalambda1}]
We may assume that the underlying set of $X$ is $\kappa$.
Write $\mc A=\{A_\nu:\nu<\kappa\}$ and,
by transfinite recursion, pick points
$\{x_\nu:\nu<\kappa\}$ from $X$ such that for each $\nu < \kappa$
\begin{align}\notag
 x_\nu\in (X\setm \overline{A_\nu})\setm \{x_\mu:\mu<\nu\}.
\end{align}
This can be done because $A_\nu$ is not dense in $X$, hence
$|X\setm \overline{A_\nu}|=\kappa$.

Let $U$ be a neighborhood assignment of $X$ such that
\begin{align}\notag
U(x_\nu)=X\setm \overline{A_\nu}
\end{align}
for all $\nu<\kappa$.
For every $D\in \br X;<\lambda;$ then, by our assumption, there is $\nu<\kappa$ with $D\subs A_\nu$,
hence $D\cap U(x_\nu)=\empt$, i.e.
$D$ does not pin down
$U$. Consequently, we indeed have $\pind(X) \ge \lambda$.

The second statement follows by applying the first one with $\lambda = \mu^+$ for all $\mu < \den(X)$.
\end{proof}

It is well-known that for every infinite cardinal $\kappa < \aleph_\omega$ we have
$\cf\big([\kappa]^{<\kappa}, \subs \big) = \kappa$, so we can easily deduce
from the previous two lemmas that $|X| < \aleph_\omega$ implies $\pind(X)=\den(X)$.
Our next two results give further ways to deduce this equality.

\begin{lemma}\label{lm:piDelta}
If  $X$ satisfies $\Delta(X)\ge\pi(X)$ then $\pind(X)=\den(X)$.
\end{lemma}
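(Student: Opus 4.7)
The plan is to prove the nontrivial inequality $\pind(X) \ge \den(X)$ by exhibiting a single neighborhood assignment $U$ which cannot be pinned down by any non-dense set.

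Since $\den(X) \le \pi(X)$ always (one point from each member of a $\pi$-base is dense), the hypothesis gives the chain $\den(X) \le \pi(X) \le \Delta(X)$. I fix a $\pi$-base $\mc B = \{B_\alpha : \alpha < \pi(X)\}$ of $X$. For each $\alpha < \pi(X)$ we have $|B_\alpha| \ge \Delta(X) \ge \pi(X) > |\alpha|$, so a straightforward transfinite recursion produces distinct points $x_\alpha \in B_\alpha \setm \{x_\beta : \beta < \alpha\}$. Define the neighborhood assignment $U : X \to \tau_X$ by $U(x_\alpha) = B_\alpha$ for every $\alpha < \pi(X)$ and $U(y) = X$ for all remaining $y \in X$.

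Now suppose $A \subs X$ pins down $U$. Then $A \cap B_\alpha = A \cap U(x_\alpha) \ne \empt$ for every $\alpha < \pi(X)$, so $A$ meets every member of $\mc B$. Because $\mc B$ is a $\pi$-base, every non-empty open subset of $X$ contains some $B_\alpha$ and therefore meets $A$; thus $A$ is dense in $X$, giving $|A| \ge \den(X)$. We conclude $\pind(X) \ge \den(X)$, and combined with the trivial inequality $\pind(X) \le \den(X)$ this yields the desired equality.

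The only delicate point is the recursive selection of the distinct $x_\alpha$, and this is precisely where the hypothesis $\Delta(X) \ge \pi(X)$ is used: it guarantees that each $B_\alpha$ has strictly more points than the total number of earlier selections, so the recursion never stalls. No other step requires genuine work.
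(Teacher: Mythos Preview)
Your proof is correct and follows essentially the same approach as the paper's: both enumerate a $\pi$-base of size $\pi(X)$, recursively select distinct points $x_\alpha \in B_\alpha$ using $\Delta(X) \ge \pi(X)$ to avoid earlier choices, and observe that any set pinning down the resulting neighborhood assignment must meet every $\pi$-base element and hence be dense. Your write-up is slightly more explicit (defining $U$ on the remaining points and spelling out the strict inequality $|B_\alpha| > |\alpha|$), but the argument is the same.
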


\begin{proof}
Write $\kappa=\pi(X)$ and $\mathcal{P} = \{U_\nu:\nu<\kappa\}$ be a $\pi$-base of $X$.
By transfinite recursion we may then pick points
$\{x_\nu:\nu<\kappa\}$ from $X$ such that for each $\nu < \kappa$
\begin{align}\notag
 x_\nu\in U_\nu \setm {\{x_\mu:\mu<\nu\}}.
\end{align}
This is possible because $|U_\nu| \ge \Delta(X) \ge \kappa$.

Let $U$ be a neighborhood assignment on $X$ such that
\begin{align}\notag
 U(x_\nu)=U_\nu \end{align}
holds for all $\nu<\kappa$. Then any set that pins down $U$ meets
every member of $\mathcal{P}$, and so is dense in $X$,
hence $\pind(X)=\den(X)$.
\end{proof}

\begin{lemma}\label{lm:pig}
If $X$ is any topological space and
\begin{align}\notag
\mc G=\{G\in \tau_X: \pi(G)\le |G|\}
\end{align}
is a $\pi$-base of $X$ then $\pind(X) = \den(X)$.
\end{lemma}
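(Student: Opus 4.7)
The plan is to find a dense open subspace $Y\subs X$ which decomposes as a disjoint union of neat opens each satisfying the hypothesis of Lemma~\ref{lm:piDelta}, and then glue the conclusions. Since $\pind(X)\le\den(X)$ always holds and $\pind$ is monotone on open subspaces (Proposition~\ref{prop:im}), it will suffice to produce such a $Y$ with $\den(Y)=\den(X)$ and $\pind(Y)=\den(Y)$; then $\pind(X)\ge\pind(Y)=\den(Y)=\den(X)$.

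The first step is to refine the $\pi$-base. I claim that
\[
\mc G'=\{G\in\mc G : G\text{ is neat}\}
\]
is itself a $\pi$-base of $X$. Given any nonempty open $U\subs X$, the hypothesis yields $G\in\mc G$ with $G\subs U$. Choose any nonempty open $V\subs G$ with $|V|=\Delta(G)$; such a $V$ is automatically neat, since any nonempty open $W\subs V$ is open in $G$ and so $|W|\ge\Delta(G)=|V|$. Now apply the $\pi$-base hypothesis a second time \emph{inside} $V$ to pick $G'\in\mc G$ with $G'\subs V$. Being a nonempty open subset of the neat $V$, $G'$ is itself neat, so $G'\in\mc G'$ and $G'\subs U$.

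Next, take a maximal disjoint family $\mc U\subs\mc G'$ and set $Y=\bigcup\mc U$. Then $Y$ is dense open in $X$, so $\den(Y)=\den(X)$. Each $U\in\mc U$ is neat and belongs to $\mc G$, hence $\pi(U)\le|U|=\Delta(U)$, and Lemma~\ref{lm:piDelta} gives $\pind(U)=\den(U)$. Since $Y$ is the disjoint union of the opens $U\in\mc U$, pinning-down sets on $Y$ can be assembled piecewise, which yields $\pind(Y)\le\sum_{U\in\mc U}\pind(U)$; conversely $\pind(Y)\ge c(Y)\ge|\mc U|$ and $\pind(Y)\ge\pind(U)$ for each $U$ by Proposition~\ref{prop:im}. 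Infinite cardinal arithmetic then forces $\pind(Y)=\sum_U\pind(U)=\sum_U\den(U)=\den(Y)$, completing the argument.

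The step I expect to carry the whole proof is the double shrinking used to construct $\mc G'$. Passing directly from $G\in\mc G$ to a neat open subset $V\subs G$ does not preserve membership in $\mc G$, because the inequality $\pi(G)\le|G|$ need not descend to a neat subset of potentially much smaller cardinality $\Delta(G)$. Invoking the $\pi$-base hypothesis a second time inside the neat $V$ is the essential trick: it furnishes a \emph{new} member of $\mc G$ whose size-vs-$\pi$-weight inequality holds from scratch and which, having landed in a neat set, is automatically neat.
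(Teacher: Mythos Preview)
Your proof is correct and follows the same route as the paper: refine $\mc G$ to its neat members, apply Lemma~\ref{lm:piDelta} on each, and sum over a maximal disjoint subfamily. One small simplification: your first shrinking is redundant---just take a neat open $V\subs U$ directly and then invoke the $\pi$-base hypothesis once inside $V$ (this is what the paper does implicitly, calling the refinement ``clear''); the paper also closes by bounding $\den(X)\le|\mc U|\cdot\pind(X)$ directly rather than passing through $\pind(Y)$.
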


\begin{proof}
 Clearly, if
 $\mc G$ is a $\pi$-base of $X$ then so is
 \begin{align}\notag
 \mc H=\{G\in \mc G: |G|=\Delta(G)\}
 \end{align}
and, by Lemma \ref{lm:piDelta}, we have
\begin{align}\notag
 \pind(G)=\den(G)
\end{align}
for all $G\in \mc H$.

Let $\mc U$ be a maximal family of pairwise disjoint
elements of $\mc H$.
Then $\bigcup \mc U$ is dense open in $X$ and
$|\mc U|\le \operatorname{c}(X)\le \pind(X)$.
So we have
\begin{align}\notag
 \den(X) = \den(\bigcup \mc U) = \sum_{U\in \mc U}\den(U)=
\sum_{U\in \mc U}\pind(U)\le |\mathcal{U}|\cdot\pind(X)=\pind(X),
\end{align}
and hence $\pind(X) = \den(X)$.
\end{proof}

As a corollary of this we get the following result.

\begin{theorem}\label{tm:compact}
For every locally compact Hausdorff space $X$ we have $\pind(X)=\den(X)$.
\end{theorem}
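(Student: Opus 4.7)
The plan is to apply Lemma \ref{lm:pig}. More precisely, I shall show that \emph{every} non-empty open subspace $G$ of $X$ satisfies $\pi(G)\le |G|$, so that the family $\mc G$ from that lemma coincides with $\tau_X\setm\{\empt\}$ and is trivially a $\pi$-base of $X$; the desired equality $\pind(X)=\den(X)$ then follows at once from Lemma \ref{lm:pig}.

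The heart of the argument is the observation that $\pi(Y)\le |Y|$ for every locally compact Hausdorff space $Y$. If $Y$ is finite, the Hausdorff axiom forces $Y$ to be discrete and the bound is trivial, so assume $|Y|$ is infinite. I pass to the one-point compactification $Y^{*}=Y\cup\{\infty\}$, a compact Hausdorff space with $|Y^{*}|=|Y|$. The singletons form a network of $Y^{*}$, so the network weight of $Y^{*}$ is at most $|Y|$; by the classical theorem that weight and network weight coincide in compact Hausdorff spaces, this gives $\we(Y^{*})\le |Y|$, and in particular $\pi(Y^{*})\le |Y|$. Since $Y$ is open in $Y^{*}$, restricting a $\pi$-base yields $\pi(Y)\le \pi(Y^{*})\le |Y|$.

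To finish, every non-empty open subset $G$ of $X$ is itself locally compact Hausdorff: by regularity combined with local compactness, each point of $G$ has a compact neighbourhood contained in $G$. The previous paragraph then gives $\pi(G)\le |G|$, i.e.\ $G\in\mc G$. Thus $\mc G=\tau_X\setm\{\empt\}$ is a $\pi$-base of $X$, and Lemma \ref{lm:pig} concludes the proof.

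The only non-routine ingredient is the equality of weight and network weight for compact Hausdorff spaces; everything else is a direct appeal to Lemma \ref{lm:pig} or an elementary property of the one-point compactification, so I do not foresee any genuine obstacle.
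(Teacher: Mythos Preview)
Your proof is correct and follows essentially the same route as the paper: both invoke Lemma~\ref{lm:pig} and verify that every non-empty open $G\subs X$ satisfies $\pi(G)\le|G|$ because $G$ is itself locally compact Hausdorff. The only difference is that the paper simply cites as ``well-known'' the inequality $\we(Y)\le|Y|$ for locally compact Hausdorff $Y$, whereas you supply a short argument via the one-point compactification and the equality of weight and network weight for compact Hausdorff spaces.
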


\begin{proof}
By lemma \ref{lm:pig} it suffices to show that
\begin{align}\notag
\{G\in \tau_X: \pi(G)\le |G|\}
\end{align}
is a $\pi$-base of $X$.

But it is well-known that even the weight of a  locally compact Hausdorff space
is less than or equal to its cardinality, hence we have $\pi(G) \le |G|$ for
all non-empty open sets $G$ in $X$.
\end{proof}

It is, of course, a natural question to raise if this equality holds for the members
of other classes of spaces. In particular, we could not answer the following
questions.

\begin{problem}
Does $\pind(X)=\den(X)$ hold true if $X$ is
\begin{enumerate}[(i)]
\item regular $\sigma$-compact, or
\item regular Lindel{\"o}f, or
\item regular countably compact, or
\item monotonically normal ?
\end{enumerate}
\end{problem}

\section{The pinning down number and cardinal arithmetic}\label{sc:pind}

\bigskip

Our first result in this section establishes the implication $(1) \Rightarrow (2)$
in theorem \ref{tm:main1}.

\begin{theorem}\label{tm:neg1}
If  $X$ is any Hausdorff space
with $$\mu \le |X|=\Delta(X) < \mu^{+\omega}$$ where $\mu$ is strong limit
then $\den(X)=\pind(X)$.
\end{theorem}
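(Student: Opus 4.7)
My plan is to invoke the ``in particular'' clause of Lemma~\ref{lm:cfkappalambda1}: it suffices to show that $\cf([\kappa]^\nu,\subs)=\kappa$ for every cardinal $\nu<\den(X)$, where $\kappa=|X|$. A preliminary step will be to use Pospi\v sil's inequality $|X|\le 2^{2^{\den(X)}}$ (valid in the Hausdorff setting) together with the strong-limit property of $\mu$ to deduce $\den(X)\ge\mu$: otherwise two successive applications of ``$2^\nu<\mu$ for $\nu<\mu$'' would give $|X|<\mu$, contradicting $\mu\le|X|$. Combined with $\den(X)\le\kappa<\mu^{+\omega}$, this forces $\den(X)\in[\mu,\mu^{+\omega})$, and in particular $\kappa=\mu^{+n}$ for some finite $n$.

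I would then verify the cofinality condition by induction on $n$. The easy sub-case is $\nu<\cf(\mu)$: every $\nu$-subset of $\mu$ lies in some $[\alpha]^\nu$ with $\alpha<\mu$, and strong-limit gives $|[\alpha]^\nu|\le|\alpha|^\nu<\mu$, hence $\cf([\mu]^\nu,\subs)\le\mu$. To pass from $\mu^{+n}$ to $\mu^{+(n+1)}$, note that every $\nu$-subset of $\mu^{+(n+1)}$ is bounded, since $\nu<\mu^{+(n+1)}=\cf(\mu^{+(n+1)})$, so a cofinal family of size $\mu^{+(n+1)}$ can be assembled by taking, for each $\alpha<\mu^{+(n+1)}$, a cofinal family in $[\alpha]^\nu$ of size at most $\mu^{+n}$ supplied by the induction hypothesis.

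The main obstacle is the remaining cases $\nu\in[\cf(\mu),\mu)$, relevant when $\mu$ is strong limit singular, and $\nu\in[\mu,\den(X))$. Here the naive bounded-subsets argument breaks down, and one must invoke PCF theory: for $\mu$ strong limit, the relevant covering numbers are governed by $\pp(\mu)$ in Shelah's sense, and combined with the hypothesis $\kappa<\mu^{+\omega}$ this yields $\cf([\kappa]^\nu,\subs)\le\kappa$ throughout the range $\nu<\den(X)$. Once this cardinal-arithmetic input is in hand, Lemma~\ref{lm:cfkappalambda1} delivers $\pind(X)\ge\den(X)$; combined with the trivial $\pind(X)\le\den(X)$, we obtain $\pind(X)=\den(X)$ as required.
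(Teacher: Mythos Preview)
Your approach has a genuine gap in the case $\den(X)=\mu$. The claim that PCF theory yields $\cf([\kappa]^\nu,\subs)\le\kappa$ for $\cf(\mu)\le\nu<\mu$ is not merely hard to justify---it is false when $\kappa=\mu$. Indeed, if $\mu$ is singular and $\nu\ge\cf(\mu)$, then $\cf([\mu]^\nu,\subs)>\mu$ by a direct diagonal argument: given any family $\{A_\alpha:\alpha<\mu\}\subs[\mu]^\nu$, split $\mu=\bigcup_{i<\cf(\mu)}I_i$ with $|I_i|<\mu$, pick $\xi_i\in\mu\setm\bigcup_{\alpha\in I_i}A_\alpha$, and observe that $\{\xi_i:i<\cf(\mu)\}$ is contained in no $A_\alpha$. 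So the ``in particular'' clause of Lemma~\ref{lm:cfkappalambda1} is simply unavailable when $|X|=\mu$. Even when $\kappa=\mu^{+n}$ with $n\ge1$, your claim would require $\cf([\mu]^\nu,\subs)\le\mu^{+n}$; for strong limit $\mu$ this amounts to a bound on $\pp(\mu)$, and nothing in the hypotheses of the theorem provides such a bound (it is consistent that $\pp(\mu)>\mu^{+n}$ for any prescribed $n$).

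The paper's proof sidesteps this obstruction by treating the case $\den(X)=\mu$ with an entirely different idea: one passes to a coarser Hausdorff topology $\sigma$ on $X$ with $\we(X^*)\le|X|$, observes that $\den(X^*)=\mu$ persists because $\mu$ is strong limit, and then applies Lemma~\ref{lm:piDelta} (since $\Delta(X^*)=|X|\ge\we(X^*)\ge\pi(X^*)$) to get $\pind(X^*)=\den(X^*)=\mu$, hence $\pind(X)\ge\mu=\den(X)$. Only in the remaining case $\den(X)=\lambda^+>\mu$ does the paper invoke Lemma~\ref{lm:cfkappalambda1}, and there only the elementary fact $\cf([\lambda^{+m}]^\lambda,\subs)=\lambda^{+m}$ is needed---no PCF input at all.
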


\begin{proof}[Proof of Theorem \ref{tm:neg1}]
Since $\mu\le |X|\le 2^{2^{\den(X)}}$ and $\mu $ is strong limit,
we have $\den(X)\ge \mu$. Now we distinguish two cases.

\noindent{\bf Case 1.}{ $\den(X)=\mu$.}

Instead of our space $(X,\tau)$ we may take
a  coarser Hausdorff  topology $\sigma$ on $X$ such that for the space space $X^*=(X,\sigma)$
we have  $\we(X^*)\le |X| = |X^*|$.
Clearly, we also have $\pind(X^*)\le \pind(X)$.
Since $\mu$ is strong limit and $X^*$ is Hausdorff,  $\den(X^*) = \mu$ holds as well.

We also have $\Delta(X^*) = \Delta(X)=|X|=|X^*|\ge \we(X^*) \ge \pi(X^*)$,  hence
by Lemma \ref{lm:piDelta}, $\den(X^*)=\pind(X^*)$.
So we have $\mu = \den(X^*)=\pind(X^*)\le \pind(X)\le \den(X)=\mu$,
which completes the proof in this case.

\medskip
\noindent{\bf Case 2.}{ $\den(X)>\mu$.}

Then $\den(X)=\lambda^+$ for some cardinal $\lambda \ge \mu$
and $|X|=\lambda^{+m}$ for some $0 < m < \omega$.
But then we have $\cf(\br \lambda^{+m};\lambda;, \subs)=\lambda^{+m}$ and so
Lemma \ref{lm:cfkappalambda1} may be applied to conclude
$\pind(X)\ge \lambda^+=\den(X)$.
\end{proof}

In order to establish the implication $(3) \Rightarrow (1)$
in theorem \ref{tm:main1} we clearly need to show how to construct a
0-dimensional Hausdorff space $X$ satisfying $\pind(X) < \den(X)$
from the assumption that $2^\kappa > \kappa^{+\omega}$ for some cardinal $\kappa$.
Note that in this case $\kappa^{+\omega}$ is a singular cardinal that is not strong limit.
In fact, our construction may be carried out
for any singular cardinal that is not strong limit.

Actually, we shall introduce two extra parameters $\sigma$ and $\varrho$ in the construction which are
not needed just for the proof of theorem \ref{tm:main1}. The role of $\sigma$ is to show a great deal of
flexibility in the choice of the density of the space we construct, while $\varrho$
will be used in the proof of theorem \ref{tm:equicons},

Before formulating our result we first present Shelah's definition of
the ``pseudopower'' $\pp(\mu)$ of an arbitrary singular cardinal $\mu$.
This will be necessary to understand our construction.

In what follows,
$\mathfrak{Reg}$ denotes the class of regular cardinals. For a singular cardinal $\mu$
we let $$\mathcal{S}(\mu) = \{\mathfrak{a} \in [\mu \cap \mathfrak{Reg}]^{\cf(\mu)} :\, \sup \mathfrak{a} = \mu \}$$
and, for $\mathfrak{a} \in \mathcal{S}(\mathfrak{a})$,

\begin{align}\notag
\mathcal{U}(\mathfrak{a}) = \text{\{$D$ : $D$ is an  ultrafilter on $\mf a$ with}\, D\cap J^{bd}[\mathfrak{a}]=\empt \},
\end{align}
where $J^{bd}[\mathfrak{a}]$ denotes the ideal of bounded subsets of $\mathfrak{a}$.
The {\em pseudopower} $\pp(\mu)$ of a singular cardinal $\mu$ is now defined as follows (see e.g. \cite{AM}).

\begin{definition}
If $\mu$ is any singular cardinal then
\begin{align}\notag
\pp(\mu)=\sup \Big\{\cf(\prod \mf a/D) : \,\, \mf a \in \mathcal{S}(\mu) \mbox{ and }
D \in \mathcal{U}(\mathfrak{a})  \Big\}.
\end{align}
\end{definition}
It will be useful to give the following, obviously equivalent, reformulation of this.
 \begin{align}\notag
 \pp(\mu)=\notag
\sup\Big\{\cf(\prod_{i\in \cf(\mu)}& k(i)/D):k\in {}^{\cf(\mu)}
{(\mu \cap \mf{Reg})}\,\mbox{ and}
\\\notag & \text{ $D$ is an ultrafilter on $\cf(\mu)$ with
$\lim\nolimits_D k=\mu$
}
 \Big\},\notag
 \end{align}
where $\lim\nolimits_D k=\mu$ means that
$\{i<\cf(\mu): k(i)>\nu\}\in D$  whenever $\nu<\mu$.

\medskip

Now, our desired construction in its most general form can be formulated as follows.

\begin{theorem}\label{tm:posplus}
Assume that $\mu$, $\lambda$, $\sigma$, and $\rho$
are infinite cardinals
such that
\begin{align}
\cf(\mu)\le \lambda <\sigma\le \mu\le \rho < \pp(\mu)\le 2^\lambda,
\end{align}
moreover
\begin{align}
  \sigma=\cf(\sigma) \text{ if } \sigma < \mu.
\end{align}
Then
there is a 0-dimensional Hausdorff space $X$
such that
\begin{enumerate}[(1)]
 \item $\pind(X)\le \lambda$,
\item $\den(X)=\sigma$,
\item $\Delta(X)=|X|=\rho$.
\end{enumerate}
In particular, if $\mu$ is a singular cardinal that is not strong limit then
there is a neat 0-dimensional Hausdorff space $X$ of size $\mu$ satisfying
$\pind(X) < \den(X) = \mu$.
\end{theorem}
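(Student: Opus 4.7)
The plan is to extract combinatorial data from the inequality $\rho<\pp(\mu)$ via PCF theory and use it to build $X$. First, set $\theta=\cf(\mu)\le\lambda$. Using the reformulation of $\pp(\mu)$ recalled just before the theorem, I would fix a function $k\colon\theta\to\mu\cap\mf{Reg}$ and an ultrafilter $D$ on $\theta$ with $\lim_D k=\mu$ and $\cf\bigl(\prod_i k(i)/D\bigr)>\rho$. Since $\lim_D k=\mu>\max(\sigma,\lambda)$, after modifying $k$ on a $D$-null set I may assume $k(i)>\max(\sigma,\lambda)$ for every $i<\theta$, and, passing to a true-cofinality witness if needed, fix a $<_D$-increasing sequence $\langle f_\alpha:\alpha<\rho^+\rangle$ in $\prod_i k(i)$.

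The underlying set of $X$ will be $\rho$, with $\alpha$ identified with $f_\alpha$, equipped with a 0-dimensional Hausdorff topology built from two interacting families of clopen sets: product-type cylinders $B_{i,s}=\{\alpha:f_\alpha\restriction i=s\}$ inherited from the Tychonoff topology on $\prod_i k(i)$ (with discrete factors), together with $D$-cones $V_g=\{\alpha:g<_D f_\alpha\}$ which, by the $<_D$-increasing property, are tails $[\alpha_g,\rho)$ and hence clopen once their complements are declared open. The cylinders will carry the cardinality and density information, while the $D$-cones are what push $\pind(X)$ below $\den(X)$. The precise choice of \emph{which} cylinders and cones appear in the basis is delicate; it has to be tuned so that $\den(X)$ drops to $\sigma$ rather than the naive upper bound $\mu$, by admitting only those parameters that can be coded in a fixed $\sigma$-sized index set.

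I then verify the three conclusions. $\Delta(X)=|X|=\rho$ follows by a size count using that $k(i)$ is large and the cofinality of $\langle f_\alpha\rangle$. For $\den(X)=\sigma$, the upper bound comes from an explicit dense subset of size $\sigma$ built from a $\sigma$-indexed cofinal subchain of $\langle f_\alpha\rangle$, while the lower bound is a PCF cofinality argument excluding smaller dense sets (with Lemma~\ref{lm:cfkappalambda1} as a template). For $\pind(X)\le\lambda$, given any neighborhood assignment $U$, for each $x\in X$ I pick a basic clopen $W_x\subseteq U(x)$ coded by parameters $(i_x,s_x,g_x)$; the $W_x$'s split into at most $\theta\le\lambda$ classes according to $i_x$, and on each class the PCF hypothesis $\cf(\prod k/D)>\rho$ combined with the $<_D$-increasing sequence should allow a regressive/diagonal argument that produces $\lambda$ many points of $X$ meeting every $W_x$ in that class. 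Making this last step precise is the main technical obstacle and is where the PCF inequality is used essentially.

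For the ``In particular'' statement, assume $\mu$ is singular and not strong limit. Choose $\lambda$ with $\cf(\mu)\le\lambda<\mu$ and $2^\lambda\ge\mu$ (possible because $\mu$ is not strong limit), and set $\sigma=\rho=\mu$. Then $\rho=\mu<\mu^+\le\pp(\mu)$ since $\mu$ is singular, and $\pp(\mu)\le 2^\lambda$ by Shelah's classical bound, which applies whenever $\cf(\mu)\le\lambda$ and $2^\lambda\ge\mu$. Thus the hypotheses of the main construction are met, and it yields a neat 0-dimensional Hausdorff space of cardinality $\mu$ with $\pind(X)\le\lambda<\mu=\den(X)$.
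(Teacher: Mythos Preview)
Your proposal extracts the right PCF data---a sequence $k$, an ultrafilter $D$, and a $<_D$-increasing cofinal family---but the topology you sketch does not do the job, and the paper's construction is genuinely different.

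The central problem is your argument for $\pind(X)\le\lambda$. You propose that, after sorting basic neighborhoods $W_x=B_{i_x,s_x}\cap V_{g_x}$ by the coordinate $i_x$, a diagonal argument produces $\lambda$ points meeting every $W_x$. The cone part is fine: the $g_x$'s (there are $\le\rho$ of them) admit a common $<_D$-upper bound $g^*$ since $\cf(\prod k/D)>\rho$. But the cylinder part fails. For a fixed coordinate $i$, the relevant cylinders are the fibers $\{\alpha:f_\alpha(i)=s_x\}$, and since $x\in W_x$ forces $s_x=f_x(i)$, as $x$ ranges over $\rho$ the values $s_x$ can exhaust all of $k(i)$. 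Pinning down those fibers therefore needs at least $k(i)>\lambda$ points. No regressive or Fodor-type argument rescues this, because the cylinders are genuinely $k(i)$-many pairwise disjoint sets. The same difficulty propagates to your density claim: with the full product-cylinder topology present, any dense set must meet every fiber at every coordinate, forcing $\den(X)\ge\sup_i k(i)=\mu$, so there is no way to ``tune'' down to an arbitrary regular $\sigma<\mu$.

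The paper avoids this by abandoning product cylinders entirely. Instead it fixes an independent family $\mc A\subs[\lambda]^\lambda$ of size $2^\lambda$, injects the parameter set $X\times\mc F\times D$ into $\mc A$, and defines each subbasic clopen set $B_0(x,f,d)$ as a union of translates of the single set $A(x,f,d)\subs\lambda$ placed inside length-$\lambda$ intervals above the graph of $f$. Independence guarantees that any finite Boolean combination $B_s$ still meets every such interval (this is the key Claim~\ref{cl:openlarge1}), and that is exactly what makes a fixed set $J$ of size $\lambda$---one length-$\lambda$ interval per coordinate, positioned using a single $\le_D$-dominating $g$---pin down every neighborhood assignment. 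The three pieces $X_0,X_1,X_2$ with their tailored index functions $k_0,k_1,k_2$ then calibrate $|X|$, $\den(X)$, and $\Delta(X)$ separately. Your derivation of the ``In particular'' clause from the main statement is correct and matches the paper.
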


\begin{proof}
It is easy to see from the above definition of $\pp(\mu)$ that, by $\rho<\pp(\mu)$,
there exist a regular cardinal $\kappa$ with $$\rho<\kappa\le \pp(\mu),$$
a function $k:\cf(\mu)\to \mu \cap \mf {Reg}$, and an
ultrafilter $D$
on $\cf (\lambda)$ with
$\lim\nolimits_Dk={\mu}$ such that
$$\cf(\prod\nolimits_{i\in \cf(\mu)} k(i)/D)=\kappa\,.$$
Since ${\lambda}<{\mu}$, we can assume without loss of generality that
\begin{align*}
 k(i)>{\lambda}\text{ for all $i<\cf({\mu})$.}
\end{align*}

Next we define two functions $k_1$ and $k_2$ with domain $\cf(\mu)$
as follows:
For any $i<\cf(\mu)$ we set
\begin{align}\notag
   k_1(i)=\begin{cases}
                 \sigma&\text{if $\sigma<\mu$,}\\
                 k(i)&\text{if $\sigma=\mu$}
                \end{cases}
  \end{align}
and
  \begin{align}\notag
   k_2(i)=\begin{cases}
                 \rho\cdot \mu^+&\text{if $\rho>\mu$,}\\
                 k(i)&\text{if $\rho=\mu$};
                \end{cases}
  \end{align}
here and in the rest of the proof ``$\,\cdot\,$'' always denotes ordinal multiplication. Hence in the case $\rho>\mu$
the values of $k_2$ are ordinals of size $\varrho$ that are not cardinals.
To simplify the notation we put
\begin{align*}
 k_0=k.
\end{align*}

Now, for each $m<3$ let us put
\begin{align}\notag
X_m=
\{\<i,m,{\alpha}\>\ :\  i<\cf({\mu}) \mbox{ and } {\alpha}<k_m(i)\}.
 \end{align}
The underlying set of  our space will be
\begin{align}\notag
X=\bigcup\nolimits_{m<3}X_m.
 \end{align}
Clearly this is a disjoint union and $|X_0| = \mu,\,|X_1| = \sigma,\,|X_2| = \varrho$, hence $|X| = \varrho$ as well.

Let us next put ${\kappa}_0={\kappa}$,
  \begin{align}\notag
\kappa_1=\begin{cases}
                 \sigma&\text{if $\sigma<\mu$,}\\
                 \kappa&\text{if $\sigma=\mu,$}
                \end{cases}
\end{align}
and
  \begin{align}\notag
   \kappa_2=\begin{cases}
                 \mu^+&\text{if $\rho>\mu$,}\\
                 \kappa&\text{if $\rho=\mu.$}
                \end{cases}
  \end{align}

Then for every $m<3$ we have $\cf(\prod_{i\in \cf(\mu)} k_m(i)/D) = \kappa_m$,
hence we may fix a $\le_D$-cofinal subfamily
$\mc F_m\subs \prod_{i\in \cf(\mu)} k_m(i)$
of cardinality $\kappa_m$.
Then we put $$\mc F=\mc F_0\times \mc F_1\times \mc F_2 ,$$
clearly, $\mathcal{F}$ has cardinality $\kappa$. Thus
every member $f\in \mc F$ is a triple of the form $f=\<f_0,f_1, f_2\>$
with $f_m \in \mathcal{F}_m$ for $m < 3$.
$\mathcal{F}$ will be used in the definition of the topology on $X$.

Next we fix an independent family
$\mc A \subs \br \lambda;\lambda;$ of cardinality $2^\lambda$.
Since $2^{\lambda}\ge \mu^{\cf(\mu)} \ge |X \times  \mathcal{F} \times D|$, we can also fix
an injection $$A : X\times \mc F\times D
\to   \mc A\,,$$
moreover we shall use the notation
\begin{align}\notag
 &A_0(x,f,d)=A(x,f,d) \text{ and }
A_1(x,f,d)=\lambda\setm A(x,f,d).
\end{align}
So, the injectivity of the map $A$ and the independence of $\mathcal{A}$ imply that for
every finite function $s \in Fn(X\times \mc F\times D,\,2 )$ we have

\begin{align}\notag
A_s =^{df} \bigcap_{(x,f,d)\in \dom  s}A_{s(x,f,d)}(x,f,d,) \ne \empt .
\end{align}

For any $x=(i,m,\zeta)\in X$ and $S\subs {\lambda}$ we shall write
\begin{align}\notag
x\oplus S=\{(i,m,\zeta\dotplus\eta): \eta\in S\},
\end{align}
where $\dotplus$ denotes ordinal addition.

Next, for any $x\in X$, $f\in \mc F$, and $d\in D$
we put
\begin{align}\label{eq:base}
B_0(x,f,d,)=\{x\}\cup \bigcup
\Big\{(j,m,\,&\lambda\cdot \alpha)\oplus A(x,f,d):\\
\notag
  &j\in d, m\in 3, f_m(j)<\alpha< k_m(j) \Big\}
\end{align}
and $B_1(x,f,d)=X\setm B_0(x,f,d)$.

For  $s\in Fn(X\times\mc F\times D, 2)$
let
\begin{align}\notag
B_s=\bigcap_{(x,f,d)\in \dom  s}B_{s(x,f,d)}(x,f,d).
\end{align}
Now, the family
\begin{align}\notag
 \mc B=\{B_s: s\in Fn(X\times\mc F\times D, 2)\}
\end{align}
will be the, obviously  clopen, base of our topology $\tau$ on $X$.

$\<X,\tau\>$ is Hausdorff because
if $x=\<i,m,{\alpha}\> \in X$ and $y \in X \setm \{x\}$ then for $d=\lambda\setm \{i\}\in D$
and an arbitrary $f\in \mc F$
we have $y\in B_0(y,f,d)$ but $x \notin B_0(y,f,d)$.

\medskip

The following observation will be crucial in the rest of our proof.
To simplify its formulation, we introduce the following piece of notation:

\begin{align*}
 \inte {\alpha}=
\big[{\lambda}\cdot {\alpha}, {\lambda}\cdot ({\alpha}\dotplus1)\ \big),
\end{align*}
where $\alpha$ is any ordinal. That is, $\inte {\alpha}$ is the interval
of order type $\lambda$ starting with ${\lambda}\cdot {\alpha}$.

\begin{claim}\label{cl:openlarge1}
Fix $s\in Fn(X\times\mc F\times D, 2)$ and assume that
$m\in 3$,   $i<\cf(\mu)$, and
$\alpha< k_m(i)$ are chosen in such a way that

\begin{align}\notag
\text{$i\in d$ and $\alpha > f_m(i)$
whenever $(x,f,d)\in \dom (s)$.}
\end{align}
Then

 \begin{align}\notag
 (\{i\}\times\{m\}\times\inte {\alpha})
\cap
 B_s\ne\empt.
 \end{align}
\end{claim}

\begin{proof}[Proof of the Claim]
Recall first that the set
\begin{align}\notag
A_s = \bigcap_{(x,f,d)\in \dom  s}A_{s(x,f,d)}(x,f,d,)
\end{align}
is non-empty. But if $\eta \in A_s$ then for every
$(x,f,d)\in \dom  s$ we  have
\begin{align}\notag
(i,m, \alpha \cdot \lambda \dotplus \eta)  \in
\big(\{i\}\times\{m\}\times
\inte {\alpha}\big)
\cap  B_{s(x,f,d)}(x,f,d)
\end{align}
because $i\in d$  and  $f_m(i)<\alpha$, hence
\begin{align}\notag
(i,m, \alpha \cdot \lambda \dotplus \eta)  \in
\big(\{i\}\times\{m\}\times
\inte {\alpha}\big)
\cap  B_s\,,
\end{align}
and this completes the proof.
\end{proof}

\begin{claim}
$\den(X)=\sigma$.
\end{claim}

\begin{proof}[Proof of the Claim]
For every basic clopen set $B_s \in \mathcal{B}$ we can
pick $i<\cf({\mu})$ and ${\alpha}<k_1(i)$ such that
$i\in d$
and  $f_1(i)<\alpha$ for all $(x,f,d)\in \dom  s$.
By Claim \ref{cl:openlarge1}
then we have
\begin{align}\notag
\big (\{i\}\times\{1\}\times
\inte {\alpha}
\big)
\cap B_s\ne \empt,
\end{align}
and so $X_1$
is dense in $X$. Consequently, $\den(X)\le |X_1| = \sigma$.

Now, consider an arbitrary set  $S \in \br X;<\sigma;$.
Then, of course,
 \begin{align}\notag
 d=
\{i\in \cf(\mu): k_0(i)>|S|\}
\in D.
 \end{align}
But $k_0(i)$ is regular for all $i$, hence
we can choose a function $p_0 \in \prod_{i\in \cf(\mu)} k_0(i)$
such that
\begin{align}\notag
S\cap (\{i\}\times \{0\}\times k_0(i))  \subs
\{i\}\times \{0\}\times\lambda\cdot p_0(i)
\end{align}
whenever $i \in d$.
We may then  pick $f\in \mc F$ such that $p_0\le_D f_0$.
Then we also have
\begin{align*}
e=\{i\in d : p_0(i)\le f_0(i)  \}\in D.
\end{align*}
But for any $i\in e$ and
$x\in \{i\}\times \{0\}\times \big(k_0(i) \setm
\lambda\cdot f(k)\big)$ we have
then $B_0(x,f,e)\cap S=\empt$, hence $S$ is not dense. Consequently, we indeed have $\,\den(X) = \sigma$.
\end{proof}

\begin{claim}
$\Delta(X)=\rho$.
\end{claim}

\begin{proof}[Proof of the Claim]
We know that $|X|=\rho$.
Now let $B_s \in \mathcal{B}$
be any basic open set.
Let us put
\begin{align}\notag
e=^{df}\bigcap\{d\in D: (x,f,d)\in \dom  s\}\in D.
\end{align}
Then, by Claim \ref{cl:openlarge1}, for every $i \in e$ and
for all  ${\alpha}$  with  $f_2(i)<\alpha<k_2(i)$ we have
\begin{align}\notag
\inte {\alpha}
\cap B_s\ne \empt,
\end{align}
and so
\begin{align}\label{eq:x2b}
|X_{2}\cap  B_s|\ge |k_2(i)\setm f_2(i)|=|k_2(i)|.
\end{align}

If $\rho>\mu$, then $|k_2(i)|=\rho$, hence $|B_s| = \rho$.
If $\rho=\mu$ then, as
\eqref{eq:x2b} holds for all $i\in e$, we have
\begin{align}\notag
|X_2\cap B_s|=
\sup_{i\in e}k(i)=\mu= \rho,
\end{align}
and so we conclude $|B_s| = \rho$ again.
Thus, indeed, we have $\Delta(X) = \rho$.
\end{proof}

\begin{claim}
$\pind(X)\le {\lambda}$.
\end{claim}

\begin{proof}[Proof of the Claim]
Clearly, it suffices to show that any neighborhood assignment of the form
$$\mathbb B =\<B_{s(y)}:y\in X\>$$ can be pinned down by a set of size $\lambda$, where $$s:X\to Fn(X\times \mc F\times D,2)$$
and $y \in B_{s((y)}$ for all $y \in X$.

Let us put
\begin{align}
 \mc F'=\{f \in \mathcal{F}:\,\exists\, (x,f,d)\in \dom (s(y))
\text{ for some } y\in X \}.
\end{align}
Then $|\mc F'|\le \rho<\kappa$ implies that there is a map $g\in \mc F_0$ such that
 \begin{align}
  f_0
\le_D g
 \end{align}
for all $f\in \mc F'$.

For every $i<\cf(\mu)$ let
\begin{align}\notag
J_i=\{i\}\times\{0\}\times \inte{g(i)}
\end{align}
and put
\begin{align}\notag
J=\bigcup_{i<\cf(\mu)}J_i.
\end{align}

Then $|J| = \lambda$ and we claim that $J$ pins down $\mathbb B$. To see this, let us
fix any $y\in X$ and set
\begin{align}\notag
e=\{i\in \cf({\mu}): i\in d \text{ and  }& f_0(i)\le g(i)
\\\notag&\text{ for all  }   (x,f,d)\in \dom  s(y)\}.
\end{align}
Then $e \in D$ and for any $i\in e$
we can
apply Claim \ref{cl:openlarge1} for $s(y)$, $0$, $i$ and $\alpha=g(i)$
to conclude that $J_i\cap B_{s(y)}\ne \empt$.
Thus, $J$ indeed pins down $\mathbb B$, which completes the proof.
\end{proof}

With this the proof of Theorem \ref{tm:posplus} has also been completed.
\end{proof}

Now we have more than necessary to prove theorem \ref{tm:main1}.

\begin{proof}[Proof of theorem \ref{tm:main1}]\makebox[1cm]{}\\
{\bf (1) implies (2)} is an immediate consequence of
theorem \ref{tm:neg1} and
lemma \ref{lm:ex2cardhomex1}.

\medskip

\noindent {\bf (2) implies (3)} is trivial.

\medskip

\noindent {\bf (3) implies (1)}. This, or rather its contrapositive,
follows immediately from theorem \ref{tm:posplus} because if $2^\kappa > \kappa^{+\omega}$
then $\mu = \kappa^{+\omega}$ is a singular cardinal that is not strong limit.
\end{proof}

Next we turn to the proof of theorem \ref{tm:equicons}. First we present a purely set-theoretic
statement, without proof, that is folklore and easy to prove.

\begin{proposition}\label{pr:cf}
If $\kappa$ is a regular cardinal and $\lambda < \kappa$ is such that
$\cf([\kappa]^\lambda, \subs) > \kappa$ then we have $\cf([\mu]^\lambda, \subs) > \mu^+$
for some singular cardinal $\mu < \kappa$.
\end{proposition}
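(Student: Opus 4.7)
The plan is to prove the contrapositive: assuming $f(\mu):=\cf([\mu]^\lambda,\subs)\le \mu^+$ for every singular cardinal $\mu<\kappa$ (with $\mu\ge\lambda$), I will deduce $f(\kappa)\le \kappa$.

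The key structural observation I would use is a ``bounded-decomposition'' bound: if $\nu$ is a cardinal with $\cf(\nu)>\lambda$ --- which in particular covers every regular $\nu>\lambda$ --- then every $\lambda$-sized subset of $\nu$ is bounded, hence $[\nu]^\lambda=\bigcup_{\alpha<\nu}[\alpha]^\lambda$. Patching together cofinal families in each $[\alpha]^\lambda$ immediately yields
\[
f(\nu)\,\le\, \nu\cdot\sup\{f(\mu):\mu<\nu\text{ a cardinal}\}.
\]

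With this bound in hand, I would prove by induction on cardinals $\nu$ in the range $\lambda\le \nu<\kappa$ that $f(\nu)\le \nu^+$ (and in fact $f(\nu)\le\nu$ as soon as $\nu$ is regular and $>\lambda$). The base case $\nu=\lambda$ is trivial since $\{\lambda\}$ is cofinal in $[\lambda]^\lambda$. For singular $\nu<\kappa$ the bound $f(\nu)\le \nu^+$ is just the standing hypothesis. For regular $\nu>\lambda$ the decomposition bound above together with the inductive estimates $f(\mu)\le \mu^+\le \nu$ (valid for every cardinal $\mu<\nu$) gives $f(\nu)\le \nu\cdot\nu=\nu$. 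Finally, applying the same decomposition to $\kappa$ itself (legitimate since $\kappa$ is regular and $>\lambda$) and using $\sup_{\mu<\kappa}\mu^+=\kappa$, one obtains $f(\kappa)\le \kappa$, as required.

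I do not anticipate a serious obstacle here: the whole argument is a routine regularity-driven bookkeeping induction and uses no PCF-theoretic input beyond the elementary fact that $\cf(\nu)>\lambda$ forces every $\lambda$-sized subset of $\nu$ to be bounded. The only points requiring a moment's care are the trivial base case and the verification that $\sup_{\mu<\nu}\mu^+\le\nu$ in both the successor and the limit case for $\nu$; both are immediate from the definition of cardinal successor.
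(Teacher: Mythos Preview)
Your argument is correct and is exactly the routine induction one has in mind for this folklore fact. The paper does not supply a proof of this proposition at all: it is explicitly presented there as ``folklore and easy to prove'' and left without argument, so there is nothing to compare against.
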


>From this proposition and from lemma \ref{lm:cfkappalambda1} we can immediately
deduce the following result.

\begin{theorem}\label{tm:H2cov}
Assume that $X$ is any topological space for which $|X| = \Delta(X)$
is a regular cardinal and $\pind(X) < \den(X)$. Then there are a cardinal $\,\lambda < \den(X)$
and a singular cardinal $\mu < |X|$ such that $$\cf([\mu]^\lambda, \subs) > \mu^+\,.$$
\end{theorem}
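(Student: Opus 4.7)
The plan is simply to read off the theorem as a two-step consequence of what we already have: first extract, via Lemma \ref{lm:cfkappalambda1}, a ``bad'' covering cardinal $\lambda$ for $\kappa=|X|=\Delta(X)$, and then push this badness down to a singular $\mu<\kappa$ by Proposition \ref{pr:cf}.

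More precisely, set $\kappa=|X|=\Delta(X)$, which is regular by hypothesis. The second (``in particular'') clause of Lemma \ref{lm:cfkappalambda1} asserts that if $\cf([\kappa]^\mu,\subs)=\kappa$ for \emph{every} cardinal $\mu<\den(X)$, then $\pind(X)=\den(X)$. Taking the contrapositive and using that $\cf([\kappa]^\mu,\subs)\ge\kappa$ always holds for $\mu<\kappa$, the assumption $\pind(X)<\den(X)$ yields some cardinal $\lambda<\den(X)$ with
\begin{equation*}
\cf\bigl([\kappa]^\lambda,\subs\bigr)>\kappa.
\end{equation*}
Note that since $\den(X)\le|X|=\kappa$, automatically $\lambda<\kappa$, so the hypotheses of Proposition \ref{pr:cf} are in place.

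Now I invoke Proposition \ref{pr:cf} with this $\kappa$ and $\lambda$: since $\kappa$ is regular, $\lambda<\kappa$, and $\cf([\kappa]^\lambda,\subs)>\kappa$, the proposition delivers a singular cardinal $\mu<\kappa=|X|$ with
\begin{equation*}
\cf\bigl([\mu]^\lambda,\subs\bigr)>\mu^+,
\end{equation*}
which is exactly the conclusion of the theorem. There is no genuine obstacle: the real content is already packaged in the earlier lemma (which translates the topological gap $\pind(X)<\den(X)$ into a covering-number gap on $\kappa$) and in the set-theoretic Proposition \ref{pr:cf} (which reflects that gap down to a singular cardinal); the present theorem is just the concatenation of the two.
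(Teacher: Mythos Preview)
Your proof is correct and follows exactly the route the paper indicates: the paper simply says the theorem is immediately deduced from Lemma~\ref{lm:cfkappalambda1} and Proposition~\ref{pr:cf}, and you have spelled out precisely that two-step deduction, including the small observation that $\lambda<\den(X)\le\kappa$ so that Proposition~\ref{pr:cf} applies.
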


But by \cite[Lemma 8.2]{Sh400a}, a highly non-trivial result of Shelah,
the existence of
a singular cardinal $\mu$ such that $\cf([\mu]^\lambda, \subs) > \mu^+$
for some $\lambda$ implies that SSH fails. Consequently, we have actually
established above the validity of the implication  (iii) $\Rightarrow$ (i)
in  theorem \ref{tm:equicons}. Since (ii) $\Rightarrow$ (iii) is trivial,
to complete the proof of theorem \ref{tm:equicons}
it only remains to show that Con(i) $\Rightarrow$ Con(ii).

Before doing that, however, we need the following lemma which is probably known.
Still we give its proof because we did not find any reference for it.
\begin{lemma}\label{lm:pp-preservation}
Assume that ${\mu}$ and ${\nu}$ are cardinals such that
\begin{align}\notag
2^{\cf({\mu})}<{\nu}<{\mu}.
\end{align}
Assume also that $W$ is an extension of our ground model $V$ such that
\begin{enumerate}
\item $On^W = On$ and $\alpha \le 2^{\cf(\mu)}$ implies $\cf^W(\alpha)= \cf(\alpha)$;
\smallskip
\item $W \vDash [V]^{2^{\cf(\mu)}} \subs V$;
\smallskip
\item
$W \vDash$  ``if $A \subs V$ and $|A| \ge \nu$ then there is $B \in V$
such that $A \subs B$ and $|A| = |B|$''.
\end{enumerate}
Then $\mu$ remains a singular cardinal in $W$, $(\mu^+)^W = \mu^+$, and
\begin{align}\label{eq:*}
\pp^W({\mu}) = \pp({\mu}).
\end{align}
Consequently, the failure of SSH in $V$ is preserved in $W$.
\end{lemma}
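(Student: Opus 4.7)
My plan is to exploit conditions~(1)--(3) in order, starting from structural preservation at the smallest scale and working upward. First, condition~(1) keeps $\cf(\mu)$, and every cardinal $\le 2^{\cf(\mu)}$, regular in $W$. Condition~(2), applied to subsets of $\cf(\mu)$ (which have $W$-size $\le \cf(\mu)\le 2^{\cf(\mu)}$), yields $\mc P^W(\cf(\mu))=\mc P^V(\cf(\mu))$, so ultrafilters on $\cf(\mu)$ are identical in both models. Likewise every function $k:\cf(\mu)\to\mathrm{Ord}$ in $W$, viewed as a subset of $\cf(\mu)\times\mathrm{Ord}$ of $W$-size $\cf(\mu)$, lies in $V$; in particular the set $\prod_i k(i)$ is the same in $V$ and $W$ whenever $k\in V$.

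Next I verify that $\mu$ stays a singular cardinal of the same cofinality in $W$, and that $(\mu^+)^W=\mu^+$. Any $V$-cofinal sequence in $\mu$ of length $\cf(\mu)$ stays cofinal in $W$, and any $W$-cofinal sequence of length $<\cf(\mu)$ would by~(2) be a $V$-set of the same order type, contradicting $\cf^V(\mu)=\cf(\mu)$; hence $\cf^W(\mu)=\cf(\mu)$. If $\mu$ or $\mu^+$ were collapsed in $W$, the resulting surjection from a smaller ordinal would have range a $W$-subset of $V$ of $W$-cardinality $\ge\nu$, so by~(3) there would be a $V$-covering set $B$ of the same $W$-cardinality; combined with a $V$-sequence of regular cardinals cofinal in $\mu$, this is ruled out by the standard PCF-theoretic bound on $|B|^V$ inside $V$.

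The main step is $\pp^W(\mu)=\pp(\mu)$. For $\pp(\mu)\le\pp^W(\mu)$: any $V$-witness $(k,D)$ with $\cf^V(\prod k/D)=\theta$ gives a $V$-cofinal family $\mc F\subs\prod k$ of $V$-size $\theta$, which by the first paragraph remains $W$-cofinal in $\prod k/D$; since $\theta$ is preserved as a regular cardinal by the second paragraph, we conclude $\cf^W(\prod k/D)=\theta$. For $\pp^W(\mu)\le\pp(\mu)$: any $W$-witness $(k,D)$ has $k,D\in V$ by the first paragraph; replacing $k(i)$ (which is $W$-regular but only a $V$-cardinal in general) by $k'(i):=\cf^V(k(i))$ produces a $V$-candidate $(k',D)$, and one must compare the cofinalities of $\prod k/D$ in $W$ and $\prod k'/D$ in $V$. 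The main obstacle is precisely this last transfer: showing $\cf^V(\prod k'/D)\ge\cf^W(\prod k/D)$ requires applying the covering hypothesis~(3) to a $W$-cofinal subfamily of $\prod k/D$ of $W$-size $\cf^W(\prod k/D)$ and then refining the resulting $V$-covering family to a genuine $V$-cofinal family of the right cardinality, via a PCF-style bounding argument. Granted the main equality, the corollary is immediate: $\pp^V(\mu)>\mu^+$ together with $(\mu^+)^W=\mu^+$ forces $\pp^W(\mu)>(\mu^+)^W$, so SSH fails in $W$.
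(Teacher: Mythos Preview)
Your outline is broadly on the right track, but you are missing the single observation that makes the paper's argument clean, and its absence leaves genuine gaps in both directions of your main step.

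The paper's key point is that condition~(3) already forces $\cf^W(\alpha)=\cf(\alpha)$ whenever $\cf^W(\alpha)\ge\nu$, and hence $\mf{Reg}^W\setminus\nu^+=\mf{Reg}\setminus\nu^+$. Once you have this, you may restrict attention to $\mf a\in\mathcal S(\mu)\cap[\mu\setminus\nu^+]^{\cf(\mu)}$ without loss, since $\sup\mf a=\mu>\nu$. For such $\mf a$ the sets $\mathcal U(\mf a)$ and $\prod\mf a$ are literally the same in $V$ and $W$ by~(2), so there is no need for your replacement $k\mapsto k'$. The equality $\cf^W(\prod\mf a,\le_D)=\cf(\prod\mf a,\le_D)$ is then a one-line covering argument: a $\le_D$-cofinal $A\subs\prod\mf a$ in $W$ has $|A|>\mu>\nu$, so by~(3) there is $B\in V$, $B\subs\prod\mf a$, with $A\subs B$ and $|B|^W=|A|^W$; since $B$ is $\le_D$-cofinal too, the $V$-cofinality is at most $|B|^V$, and the other inequality is trivial.

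Without the regular-cardinal preservation above $\nu$, your argument has holes on both sides. In the direction $\pp(\mu)\le\pp^W(\mu)$ you conclude $\cf^W(\prod k/D)=\theta$, but you never check that $(k,D)$ is a \emph{$W$-witness}: you need each $k(i)$ to be $W$-regular, which is exactly what the missing observation gives (on a $D$-large set, since $\lim_D k=\mu>\nu$). In the other direction you correctly note the obstacle but offer only ``a PCF-style bounding argument'' for the transfer from $\cf^W(\prod k/D)$ to $\cf^V(\prod k'/D)$; this is the substantive content of the lemma and cannot be left as a gesture. In short, prove $\mf{Reg}^W\setminus\nu^+=\mf{Reg}\setminus\nu^+$ first, work with $\mf a\subs\mu\setminus\nu^+$, and the $k'$ detour and the unfinished bounding step both disappear.
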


\begin{proof}
Only \eqref{eq:*} needs verification. To this end, note first that, by (3),
we have $\cf^W(\alpha)= \cf(\alpha)$ for any ordinal $\alpha$
such that $\cf^W(\alpha) \ge \nu$. This clearly implies that

\begin{align}\label{eq:regular}
 \mf{Reg}^W\setm {\nu}^+ =
 \mf{Reg}\setm {\nu}^+.
\end{align}

It follows from (2) that we also have
$$\mathcal{S}^W(\mu) \cap [\mu \setm \nu^+]^{\cf(\mu)} = \mathcal{S}(\mu) \cap [\mu \setm \nu^+]^{\cf(\mu)}\,.$$
Then, by (2) again, we clearly have
$$\mathcal{U}^W(\mathfrak{a}) = \mathcal{U}(\mathfrak{a}) \mbox{ and } \big(\prod \mf a\big)^W = \prod \mf a$$
whenever $\mathfrak{a} \in \mathcal{S}(\mu) \cap [\mu \setm \nu^+]^{\cf(\mu)}$.

Consequently, \eqref{eq:*} will follow if we can show that
$$\cf^W(\prod \mathfrak{a},\,\le_D) = \cf(\prod \mathfrak{a},\,\le_D)$$
whenever $\mathfrak{a} \in \mathcal{S}(\mu) \cap [\mu \setm \nu^+]^{\cf(\mu)}$ and
$D \in \mathcal{U}(\mathfrak{a})$.
To see this, let us fix, in $W$, any such $\mf a$ and $D$, moreover consider any $\le_D$-cofinal subset $A \subs \prod\mf a$.
Then $|A| > \mu > \nu$ implies by (3) that there is  $B \subs \prod \mf a$ such that
$B \in V$, $|A| = |B|$, and $A \subs B$. But then $B$ is also $\le_D$-cofinal in $\prod\mf a$,
which clearly implies that $\cf^W(\prod \mathfrak{a},\,\le_D) \ge \cf(\prod \mathfrak{a},\,\le_D)$.
But $\cf^W(\prod \mathfrak{a},\,\le_D) \le \cf(\prod \mathfrak{a},\,\le_D)$ is trivially true,
and so the proof of lemma \ref{lm:pp-preservation} is completed.
\end{proof}

\medskip

Now we are ready to finish the proof of theorem \ref{tm:equicons}.

\begin{proof}[Proof of Con(i) $\Rightarrow$ Con(ii)]
Assume that Shelah's Strong Hypothesis fails, i.e.
\begin{align}\notag
\text{$\pp(\mu)>\mu^+$ for some singular cardinal $\mu$}.
\end{align}
But if $\mu$ is not strong limit then there is a cardinal $\,\lambda$ such that $\cf(\mu) \le \lambda<\mu$
and $2^\lambda>\mu$. But then $2^\lambda \ge \mu^{\cf(\mu)} \ge \pp(\mu)$ as well, hence
we can apply Theorem  \ref{tm:posplus} with e.g. $\sigma=\mu$ and $\rho=\mu^+$
to obtain a $0$-dimensional Hausdorff space $X$
with $\pind(X)\le \lambda < \den(X)=\mu$ and $|X|=\Delta(X)=\mu^+$.

If $\mu$ is strong limit then we take
\begin{align*}
\text{$\lambda={(2^{\cf(\mu)})}^+$ and  $\nu=(2^{\lambda})^+$},
\end{align*}
and consider the forcing notion
\begin{align}\notag
P = \operatorname{Fn}(2^\mu\times\lambda,2;\lambda)
\end{align}
which adds $2^\mu$ Cohen subsets of $\lambda$ with conditions
of size $\le 2^{\cf(\mu)}$. Let $G$ be $P$-generic
over the ground model $V$. We claim that the generic extension $W = V[G] \supset V$ satisfies
the conditions of lemma \ref{lm:pp-preservation}.

Indeed, this follows immediately from the facts that $P$ is both $\lambda$-closed
and $\nu$-CC, using standard theorems of forcing theory. Of course, we also
have $2^\lambda = 2^\mu \ge \mu^{\cf(\mu)} \ge \pp(\mu)$ in $V[G]$, as well as
$\mu^+ < \pp(\mu)$ by lemma \ref{lm:pp-preservation}. Putting these together
we get
 \begin{align*}
 V[G]\models \cf({\mu})<{\lambda}<{\mu}<{\mu}^+<\pp({\mu})\le 2^{\lambda},
 \end{align*}
consequently, theorem \ref{tm:posplus} applied in $V[G]$ yields a
0-dimensional Hausdorff space $X$ in $V[G]$
that satisfies $|X|=\mDelta(X) = \mu^+$,
a regular cardinal, and $\pind(X)<\den(X)$.
\end{proof}

The following problem can now be raised naturally.

\begin{problem}
Is the existence of a neat (Hausdorff) space $X$ of regular size with $\pind(X) < \den(X)$
actually equivalent, and not just equiconsistent, with that of a 0-dimensional (or regular) such space?
\end{problem}

\bigskip

\section{Inequalities involving the pinning down number}

The first inequality we establish is an improvement of Pospi\v sil's classical
inequality $|X|\le 2^{2^{\den(X)}}$ for any Hausdorff space $X$.
Of course, it is only a proper improvement if the (equivalent) statements
of theorem \ref{tm:main1} fail.

\begin{theorem}\label{tm:cardless22pdX}
 $|X|\le 2^{2^{\pind(X)}}$ for every  Hausdorff space $X$.
\end{theorem}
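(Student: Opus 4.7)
My plan is to try to generalize Pospi\v sil's classical proof that $|X|\le 2^{2^{\den(X)}}$. Recall that he fixes a dense $D\subs X$ of size $\den(X)$ and verifies that the map $\nu: X\to \mc P(\mc P(D))$ given by $\nu(x)=\{A\subs D:x\in \overline A\}$ is injective, exploiting Hausdorffness: given $x\ne y$, pick disjoint open $U\ni x$ and $V\ni y$; then $A=D\cap U$ satisfies $x\in \overline A$ by density of $D$ in $U$, while $y\notin \overline A$ since $A\subs X\setm V$. The codomain has size $2^{2^{|D|}}$.

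Setting $\kappa=\pind(X)$, the naive idea of replacing $D$ by a single pinning-down set of size $\kappa$ collapses: such a set need not accumulate at every point, so the step $x\in \overline{D\cap U}$ is lost. My plan is therefore to replace $\mc P(D)$ with a family $\mc A\subs [X]^{\le \kappa}$ of size $\le 2^\kappa$ and consider the map $\nu(x)=\{A\in \mc A:x\in \overline A\}\in \mc P(\mc A)$. If $\mc A$ is separating in the closure sense, this gives an injection $X\hookrightarrow \mc P(\mc A)$ and hence $|X|\le 2^{|\mc A|}\le 2^{2^\kappa}$.

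The family $\mc A$ would be built by transfinite recursion of length at most $2^\kappa$. At each stage $\alpha$, select a pair $(x_\alpha,y_\alpha)$ not yet separated by the current $\mc A$, apply Hausdorff to obtain disjoint opens $U_\alpha\ni x_\alpha$ and $V_\alpha\ni y_\alpha$, build a neighborhood assignment $W_\alpha$ agreeing with $U_\alpha,V_\alpha$ at $x_\alpha,y_\alpha$, pin it down by some $A_\alpha\in [X]^{\le\kappa}$, and adjoin $A_\alpha\cap U_\alpha$ (and $A_\alpha\cap V_\alpha$) to $\mc A$. Each such set is non-empty by the pinning-down property and contained in $X\setm V_\alpha$ (respectively $X\setm U_\alpha$), so its closure separates $x_\alpha$ from $y_\alpha$.

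The main obstacle is bookkeeping: a priori there are $|X|^2$ pairs but we allow ourselves only $2^\kappa$ stages. I expect the resolution to come from a pigeonhole argument on $\nu$-signatures: if $|X|>2^{2^\kappa}$ then the recursion produces $\mc A$ under which many distinct points share a common value of $\nu$, and a single further application of the pinning-down property, using $\pind(X)=\kappa$ to pin down a Hausdorff-witnessing neighborhood assignment built from such a coincidence, would provide a new $A\in [X]^{\le\kappa}$ contradicting the supposed closure of the recursion. An alternative route, which may in fact be cleaner, is a reflection argument with an elementary submodel $M\prec H(\theta)$ of size $2^{2^\kappa}$ closed under $\kappa$-sequences, showing $X\subs M$ by separating any putative $x\in X\setm M$ from $X\cap M$ via a pinning-down set for an $M$-definable neighborhood assignment.
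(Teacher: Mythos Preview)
Your proposal is a plan rather than a proof, and both routes you outline have genuine gaps.

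For the recursive separation approach: nothing forces termination in at most $2^\kappa$ stages, and your pigeonhole fix is too vague to evaluate. More tellingly, for \emph{any} family $\mc A\subs\br X;\le\kappa;$ with $|\mc A|\le 2^\kappa$ one has $\big|\bigcup_{A\in\mc A}\overline A\big|\le 2^\kappa\cdot 2^{2^\kappa}=2^{2^\kappa}$ by Pospi\v sil applied to each $\overline A$, and your map $\nu$ is constantly $\empt$ off this union. So $\nu$ can be injective only when $|X|\le 2^{2^\kappa}$ already; the construction cannot \emph{produce} the bound, only witness it after the fact. For the elementary-submodel route: a neighborhood assignment separating a putative $x\in X\setm M$ from points of $X\cap M$ must use $x$ as a parameter---Hausdorffness gives, for each $y\ne x$, an open $V_y\ni y$ with $x\notin V_y$, but there is no reason such $V_y$ lies in $M$, since $x\notin M$. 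Hence the assignment is not $M$-definable, $M$ need not contain a pinning-down set for it, and the argument stalls.

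The paper's proof takes a different tack and never builds a separating family. Set $\mu=\pind(X)$, $\kappa=2^{2^\mu}$, and let $V=\bigcup\{U\in\tau_X:|U|\le\kappa\}$. First $|V|\le\kappa$: otherwise an open $Y\subs V$ of size $\kappa^+$ satisfies $(\kappa^+)^\mu=\kappa^+$, so one can enumerate $\br Y;\mu;$, use Pospi\v sil's bound $|\overline A|\le\kappa$ for each $A\in\br Y;\mu;$, and diagonalize to get a neighborhood assignment on $Y$ that no set of size $\mu$ pins down. If $X\ne V$, pick an open $W$ of minimal size $\lambda>\kappa$, pass to a coarser Hausdorff topology on $W$ of weight $\le\lambda$, and enumerate as $\{B_\nu:\nu<\lambda\}$ those basic open sets of size $\lambda$ (equivalently, those not contained in $V$). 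Picking distinct $x_\nu\in B_\nu$ and setting $U(x_\nu)=B_\nu$ yields an assignment that no $A\in\br W;\mu;$ pins down: Pospi\v sil gives $|\overline A|\le\kappa$, hence $W\setm\overline A\not\subs V$, hence some $B_\nu$ misses $A$. The key move you are missing is to turn Pospi\v sil around---apply it to the closures of \emph{candidate pinning-down sets} to manufacture a bad neighborhood assignment---rather than to try to lift the injection from his original argument.
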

\begin{proof}

To simplify our notation, we put $\mu=\pind(X)$  and
$\kappa=2^{2^{\mu}}$.
Let us now consider the set
\begin{align}\notag
V= \bigcup\{U\in \tau_X: |U|\le \kappa\},
 \end{align}

\begin{claim}
\label{lm:smalldeltap}
$|V|\le \kappa$.
\end{claim}

\begin{proof}[Proof of the Claim]
Assume, arguing indirectly, that $|V|> \kappa$. Then clearly
 $V$ contains an
open subspace $Y$ with $|Y|=\kappa^+$. Since $(\kappa^+)^\mu = \kappa^+$,
we may fix an enumeration $\{A_\nu:\nu<\kappa^+\}$ of $\br Y;\mu;$.
By transfinite recursion, for all $\nu <\kappa^+$ we pick
\begin{align}\notag
x_\nu \in (Y\setm \overline{A_\nu})\setm \{x_\zeta:\zeta<\nu\}.
\end{align}
This can be done because, by Pospi\v sil's theorem, $|\overline{A_\nu}|\le \kappa$,
hence $|Y\setm \overline{A_\nu}|=\kappa^+$.

Now, let $U$ be any neighborhood assignment on $Y$ such that
$U(x_\nu)=Y\setm \overline{A_\nu}$. But then $U$ can not be pinned down by a set of
size $\mu = \pind(X)$, a contradiction.
\end{proof}
Note that our aim: to show that $|X| \le \kappa$, is equivalent to showing $X = V$.

Assume, on the contrary again, that $|X|>\kappa$,
that is $X\ne V$.
Then we can define
\begin{align}\notag
 \lambda=\min\{|G|: G \in \tau_X \text{ and }  |G|>\kappa \},
\end{align}
and fix $W$, an open subset of $X$ with $|W|=\lambda$.
Of course, we also have $\pind(W)\le \pind(X) = \mu$.

Instead of the subspace topology on  $W$ inherited from $X$ we may
consider a  coarser Hausdorff  topology $\sigma$ such that the Hausdorff space $W^*=(W,\sigma)$
has weight  $\we(W^*)\le |W| = \lambda$.
Then we have $\pind(W^*) \le \pind (W) \ge \mu$ and, by Pospi\v sil's theorem, $\lambda >\kappa$ implies $\den(W^*)>\mu$.

Let $\mc B$ be a base of $W^*$ with $|\mc B|\le \lambda$ and
let $\{B_\nu:\nu<\lambda\}$ enumerate  $\mathcal{C} = \{B \in \mc B : |B| = \lambda\}$.
Note that, by the minimality of $\lambda > \kappa$,
we also have $\mathcal{C} = \{B \in \mc B : B \setm V \ne \empt \}$.

By transfinite recursion,  for all $\nu<\lambda$ we may then
pick
\begin{align}\notag
x_\nu\in B_\nu\setm \{x_\xi:\xi<\nu.\}.
\end{align}

Let $U$ be a neighborhood assignment on $X$ such that
$U(x_{\nu})=B_\nu$ for all $\nu < \lambda$.
We claim that $U$ can not be pinned down by any set of size $\mu$.
Indeed, let $A\in \br W;\mu;$.
Then
\begin{align}\notag
 |\overline{A}^\sigma|\le \kappa,
\end{align}
hence $|W\setm \overline{A}^\sigma| = \lambda$,
so $W\setm \overline{A}^\sigma\not\subs V$.
But $W\setm \overline{A}^\sigma\in \sigma$,
so there is $B\in \mc B$
such that $B\subs W\setm \overline{A}^\sigma$ and $B\not\subs V$.
Then $B \in \mathcal{C}$,
and so $B=B_\nu$ for some $\nu<\lambda$. But then $U(x_\nu)\cap A=B_\nu\cap A=\empt$,
showing that $A$ does not pin down $U$.
But this implies $\pind(W^*)>\mu$, which is a contradiction that completes the proof.
\end{proof}

\begin{theorem}\label{tm:small_or_eq_t2}
If $X$ is any Hausdorff  space which satisfies $\pind(X)<\den(X)$
then $\Delta(X)<2^{2^{\pind(X)}}$.
\end{theorem}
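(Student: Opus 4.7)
The plan is to pass to the neat open subspace supplied by Lemma~\ref{lm:ex2cardhomex1} and then bound its cardinality strictly below $\kappa = 2^{2^{\pind(X)}}$; this will bound $\Delta(X)$ as well. Write $\mu = \pind(X)$. By Lemma~\ref{lm:ex2cardhomex1} there is a neat open $Y \subs X$ with $\pind(Y) \le \mu$ and $\den(Y) > \mu$, and since $Y$ is a nonempty open subset of $X$ we automatically have $\Delta(X) \le |Y|$. Thus it suffices to prove $|Y| < 2^{2^\mu}$.

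The heart of the argument is to establish the cardinal-arithmetic inequality $|Y|^\mu > |Y|$, which I will derive as a contradiction from the opposite assumption $|Y|^\mu = |Y|$. Under that assumption I enumerate $\br Y;\mu; = \{A_\nu : \nu < |Y|\}$. Because $|A_\nu| = \mu < \den(Y)$, no $A_\nu$ is dense in $Y$, so $G_\nu = Y \setm \overline{A_\nu}$ is a nonempty open subset of $Y$; by the neatness of $Y$ this forces $|G_\nu| = \Delta(Y) = |Y|$. Hence by transfinite recursion I may pick distinct points $x_\nu \in G_\nu \setm \{x_\xi : \xi < \nu\}$ for each $\nu < |Y|$, using that $|\{x_\xi : \xi < \nu\}| \le |\nu| < |Y|$. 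Defining a neighborhood assignment $U$ on $Y$ by $U(x_\nu) = G_\nu$ and $U(y) = Y$ otherwise, any $A \in \br Y;\mu;$ equals some $A_\nu$ and then $A \cap U(x_\nu) = A_\nu \cap (Y \setm \overline{A_\nu}) = \empt$. Since any subset of $Y$ of size $\le \mu$ can be enlarged to a member of $\br Y;\mu;$ (as $|Y| > \mu$) without losing the pinning property, no set of size $\le \mu$ pins down $U$, contradicting $\pind(Y) \le \mu$.

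Once $|Y|^\mu > |Y|$ is in hand, the proof closes with a one-line computation. Theorem~\ref{tm:cardless22pdX} gives $|Y| \le 2^{2^\mu}$, whence $|Y|^\mu \le (2^{2^\mu})^\mu = 2^{2^\mu \cdot \mu} = 2^{2^\mu}$, and combined with $|Y|^+ \le |Y|^\mu$ this yields $|Y| < 2^{2^\mu}$. Therefore $\Delta(X) \le |Y| < 2^{2^{\pind(X)}}$, as required. The main technical step is the recursion; it succeeds precisely because neatness keeps every $G_\nu$ of full size $|Y|$, while the hypothetical equality $|Y|^\mu = |Y|$ is exactly what makes the index set of $\br Y;\mu;$ short enough for fewer than $|Y|$ points to have been excluded at each stage.
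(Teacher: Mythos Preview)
Your proof is correct and rests on the same two ingredients as the paper's --- Theorem~\ref{tm:cardless22pdX} and the enumeration argument behind Lemma~\ref{lm:cfkappalambda1}. The paper's version is slightly more economical: arguing by contradiction, if $\Delta(X) \ge 2^{2^\mu}$ then Theorem~\ref{tm:cardless22pdX} forces $|X| = \Delta(X) = 2^{2^\mu}$, so $X$ itself is already neat with $|X|^\mu = |X|$, and a direct citation of Lemma~\ref{lm:cfkappalambda1} (with $\kappa = 2^{2^\mu}$ and $\lambda = \mu^+ \le \den(X)$) gives $\pind(X) \ge \mu^+$; hence your passage to $Y$ via Lemma~\ref{lm:ex2cardhomex1} and your hand-rolled transfinite recursion (which simply reproves Lemma~\ref{lm:cfkappalambda1}) are both avoidable.
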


\begin{proof}
Since $|X|\le 2^{2^{\pind(X)}}$ by Theorem \ref{tm:cardless22pdX},
$\Delta(X)\ge 2^{2^{\pind(X)}}$ would imply
$|X|=\Delta(X)=2^{2^{\pind(X)}} = \kappa$.
But for $\mu=\pind(X)$ we have  $\kappa^\mu=\kappa$,
hence we can apply  lemma \ref{lm:cfkappalambda1} with $\lambda = \mu^+ \le \den(X)$
to conclude that $\pind(X) = \mu \ge \lambda$, which contradicts our choice
of $\mu$ and $\lambda$. Thus we must have $\Delta(X)<2^{2^{\pind(X)}}$.
\end{proof}

This is all the inequalities we have for Hausdorff spaces and now we turn to the
study of regular spaces. Perhaps the best known and most frequently applied inequality
concerning a regular space $X$ that involves the density is $\we(X) \le 2^{\den(X)}$.
This led us to raise the following question.

\begin{problem}\label{pr:we}
Does $\we(X)\le 2^{\pind(X)}$ hold for every regular space $X$?
\end{problem}

This question remains wide open but we managed to obtain quite a few
interesting and non-trivial results abut the cardinal function $\pind(X)$
for regular $X$.

We recall that a topological space $X$ is called {\em weakly separated} iff there is
a neighborhood assignment $U$ on $X$ such that either $x\not\in U(y)$ or $y\notin U(x)$
whenever $\{x,y\}\in \br X;2;$.
The related cardinal function $\operatorname{R}(X)$ is defined as the supremum of the
cardinalities of all weakly separated subspaces of $X$. Since $\operatorname{R}(X) \le \we(X)$
but ``not much less than'' $\we(X)$, our following result may be considered as a
partial affirmative answer to problem \ref{pr:we}.

\begin{lemma}\label{lm:Rle2pd}
If $X$ is a neat regular space then
$\operatorname{R}(X)\le 2^{\pind(X)}$.
\end{lemma}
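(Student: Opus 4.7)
The plan is to argue by contradiction: assume $Y \subs X$ is weakly separated, witnessed by $U : Y \to \tau_X$, with $|Y| > 2^\kappa$, where $\kappa = \pind(X)$. Using regularity, I shrink each $U(y)$ to an open $V(y) \ni y$ with $\overline{V(y)} \subs U(y)$, so that weak separation takes the sharper form: for distinct $y, y' \in Y$, at least one of $y' \notin \overline{V(y)}$, $y \notin \overline{V(y')}$ holds. Extending $V$ to $X$ by $V(x) = X$ for $x \notin Y$ and invoking $\pind(X) \le \kappa$ yields a set $A \in [X]^{\le \kappa}$ with $A \cap V(y) \ne \empt$ for every $y \in Y$.

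The natural map to analyse is the trace $\phi : Y \to \mc P(A) \setm \{\empt\}$ defined by $\phi(y) = A \cap V(y)$; its range has cardinality at most $2^\kappa < |Y|$, so injectivity of $\phi$ would be enough. To test it, assume $\phi(y) = \phi(y')$ with $y \ne y'$ and (WLOG, by the sharpened weak separation) $y' \notin \overline{V(y)}$. Then $G = V(y') \setm \overline{V(y)}$ is a non-empty open neighbourhood of $y'$, and
\begin{equation*}
A \cap G = (A \cap V(y')) \setm \overline{V(y)} = (A \cap V(y)) \setm \overline{V(y)} = \empt,
\end{equation*}
because $A \cap V(y) \subs V(y) \subs \overline{V(y)}$. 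Thus injectivity of $\phi$ is equivalent to the pinning set $A$ being dense in $X$, which one cannot assume in general.

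Neatness has to do real work here: since $G$ is a non-empty open subset of the neat space $X$, we have $|G| = \Delta(X) = |X|$, so $G$ is as large as an open set can be. The strategy to convert this bulk into a contradiction is to exhibit some $z \in Y \cap G$ whose chosen neighbourhood $V(z)$ is already contained in $G$, which directly contradicts $A \cap V(z) \ne \empt$. The natural way to force this is to redo the choice of $V$ inductively along a well-ordering $Y = \{y_\alpha : \alpha < |Y|\}$, at each stage $\alpha$ picking $V(y_\alpha)$ so that $\overline{V(y_\alpha)}$ lies inside $U(y_\alpha) \setm \bigcup \{\overline{V(y_\beta)} : \beta < \alpha,\, y_\alpha \notin \overline{V(y_\beta)}\}$. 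The main obstacle I expect is preserving openness of this recursive choice at limit stages, where potentially many closures must be simultaneously avoided — the natural intersection is only $G_\delta$-like. Overcoming this is where the combination of neatness, regularity, and the bound $c(X) \le \pind(X) = \kappa$ should be used to trim the index set of relevant previous closures down to a manageable size at each stage.
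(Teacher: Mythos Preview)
Your proposal is incomplete, and the gap you flag at the end is genuine and not obviously closable along the lines you suggest. The recursive choice requires, at stage $\alpha$, an open neighbourhood of $y_\alpha$ inside
\[
U(y_\alpha)\setminus\bigcup\bigl\{\overline{V(y_\beta)}:\beta<\alpha,\ y_\alpha\notin\overline{V(y_\beta)}\bigr\},
\]
but this is an intersection of up to $|\alpha|$ open sets and need not be open. What you would need is control over the character $\chi(y_\alpha,X)$, and neither neatness nor the bound $c(X)\le\pind(X)=\kappa$ provides that: neatness only says non-empty open sets have full cardinality, and cellularity bounds disjoint open families, whereas the closures $\overline{V(y_\beta)}$ have no disjointness structure. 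Since $|Y|$ may far exceed $\kappa$, there is no evident mechanism for trimming the relevant index set at each stage.

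The paper's argument is entirely different and sidesteps this. Given a weakly separated $Y\subs X$, one passes to a coarser regular topology $\sigma$ on $X$ in which $Y$ remains weakly separated and $\we(X^*)\le|X|$, where $X^*=(X,\sigma)$. Coarsening preserves neatness, so $\pi(X^*)\le\we(X^*)\le|X^*|=\Delta(X^*)$, and Lemma~\ref{lm:piDelta} yields $\pind(X^*)=\den(X^*)$. Since coarsening can only decrease $\pind$, one concludes
\[
|Y|\le\we(X^*)\le 2^{\den(X^*)}=2^{\pind(X^*)}\le 2^{\pind(X)}.
\]
Thus neatness is exploited through the inequality $\Delta\ge\pi$ in the coarsened space, reducing everything to the already-established Lemma~\ref{lm:piDelta} and the classical bound $\we\le 2^{\den}$ for regular spaces, rather than through any direct construction of pinning sets or neighbourhood assignments.
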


\begin{proof}
Let $Y$ be any weakly separated subspace of $X$;
we want to show that $|Y|\le 2^{\pind(X)}$. It is easy to see that
we can find a  coarser regular  topology $\sigma$ on $X$
such that for the space
 $X^*=(X,\sigma)$ we have
 $\we(X^*)\le |X|$ and $Y$ remains weakly separated in $X^*$.

Clearly, $X^*$ is also neat, hence $$\pi (X^*) \le\we(X^*)\le |X^*| = \Delta(X^*)$$ imply $\den(X^*)=\pind(X^*)$
by Lemma \ref{lm:piDelta}.   Since $\pind(X^*)\le \pind(X)$,
we may then conclude
\begin{align}\notag
|Y|\le \we(X^*)\le 2^{\den(X^*)}=2^{\pind(X^*)}\le 2^{\pind(X)}.
\end{align}
\end{proof}

We do not know if the neatness condition is necessary in the previous result but
it is not needed in the next one.

\begin{lemma}\label{lm:dle2pd}
$\den(X)\le 2^{\pind(X)}$ holds for any regular space $X$.
\end{lemma}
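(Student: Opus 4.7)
The plan is to combine Lemma \ref{lm:Rle2pd} with two simple observations: that a maximal weakly separated subspace of any topological space must be dense, and that the general case reduces to the neat case via the cellularity bound $c(X) \le \pind(X)$.

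The key observation is that in any topological space $Z$, every maximal weakly separated subspace $Y \subs Z$ is dense in $Z$. Indeed, suppose for contradiction that $x \in Z \setm \overline Y$. Then $W := Z \setm \overline Y$ is open in $Z$, contains $x$, and is disjoint from $Y$. If $V: Y \to \tau_Z$ witnesses the weak separation of $Y$, then extending $V$ by $V(x) := W$ still witnesses the weak separation of $Y \cup \{x\}$: every $y \in Y$ satisfies $y \in \overline Y$, hence $y \notin V(x) = W$, so the pair $\{x, y\}$ is witnessed by the clause ``$y \notin V(x)$''. This contradicts the maximality of $Y$. Consequently $\den(Z) \le R(Z)$ for every topological space $Z$.

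Applying this to a neat regular space $Z$ with $\pind(Z) \le \mu$, Lemma \ref{lm:Rle2pd} yields $R(Z) \le 2^\mu$, so $\den(Z) \le 2^\mu$. For an arbitrary regular space $X$ with $\pind(X) = \mu$, we reduce to the neat case via cellularity. Since $c(X) \le \pind(X) = \mu$, we may choose, exactly as in the proof of Lemma \ref{lm:ex2cardhomex1}, a maximal disjoint family $\mc U$ of neat open subspaces of $X$; then $|\mc U| \le \mu$ and $\bigcup \mc U$ is dense in $X$. Hence
\[
\den(X) = \den\bigl(\textstyle\bigcup \mc U\bigr) \le |\mc U| \cdot \sup\{\den(U) : U \in \mc U\},
\]
and each $U \in \mc U$ is a neat regular space with $\pind(U) \le \mu$ by Proposition \ref{prop:im}(i). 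The neat case above gives $\den(U) \le 2^\mu$ for each such $U$, so $\den(X) \le \mu \cdot 2^\mu = 2^\mu$, as required.

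There is no genuine obstacle in this plan; the whole argument hinges on the one-line observation that maximality in the weakly-separated partial order forces density, using only the ambient open set $Z \setm \overline Y$ to extend the weak separation assignment.
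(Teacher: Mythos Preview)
Your proof is correct and follows essentially the same route as the paper: decompose $X$ into a maximal cellular family of neat open sets, apply Lemma~\ref{lm:Rle2pd} to each piece, and sum. The paper invokes the inequality $\den(H)\le \operatorname{R}(H)$ without comment, whereas you supply the standard one-line argument for it; otherwise the two proofs are the same.
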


\begin{proof}
Let $\mc H$ be a maximal disjoint family of pairwise disjoint neat open subspaces of $X$.
Then $|\mc H|\le c(X)\le \pind(X)$, moreover $\bigcup\mc H$
is dense in $X$.   We have $\den(H)\le \operatorname{R}(H)\le 2^{\pind(H)}$ for all $H\in \mc H$
by Lemma \ref{lm:Rle2pd},
consequently
\begin{align}\notag
\den(X) = \den\big(\bigcup \mc H\big) = \sum_{H\in \mc H}\den(H)
\le |\mc H|\cdot 2^{\pind(X)}=2^{\pind(X)}.
\end{align}
\end{proof}

Our following result does not involve the pinning down number, still
it will be crucial in our later results that do.

\begin{theorem}\label{tm:project}
 Let $X$ be a regular space and
$\mu$ be a regular cardinal such that
$$\operatorname{hL}(X)\le \mu\le \min(\Delta(X),\we(X)).$$
Then there is  a regular continuous image $Y$ of $X$ for which
$\Delta(Y)\ge \we(Y)=\mu$ holds.
\end{theorem}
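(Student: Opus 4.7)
The plan is to take $Y$ as the $T_0$-reflection of a suitably coarser topology $\tau^*$ on $X$, manufactured by a single elementary submodel of size $\mu$.

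Fix a sufficiently large regular cardinal $\theta$ and pick $M\prec H(\theta)$ with $X,\tau_X\in M$, $|M|=\mu$, $\mu+1\subs M$, and $[M]^{<\mu}\subs M$; such an $M$ exists because $\mu$ is regular. Set $\mc U=\tau_X\cap M$. Then $|\mc U|\le\mu$, and since $[M]^{<\omega}\subs M$ the family $\mc U$ is closed under finite intersections, so it is a base for the topology $\tau^*$ it generates on $X$. Let $\sim$ denote $\tau^*$-indistinguishability, put $Y:=X/\sim$ with the quotient topology, and let $\pi\colon X\to Y$ be the projection. Since $\tau^*\subs\tau_X$, the map $\pi$ is continuous, so $Y$ is a continuous image of $X$.

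\emph{Regularity of $Y$.} For $U\in\mc U$ and $x\in U$, the regularity of $X$ combined with $\operatorname{hL}(X)\le\mu$ produces a family $\mc F_U$ of size $\le\mu$ of pairs $(V,W)\in\tau_X^2$ satisfying $V\cap W=\empt$, $X\setm U\subs W$, and $U=\bigcup\{V:(V,W)\in\mc F_U\}$. By elementarity such an $\mc F_U$ exists in $M$, and since $|\mc F_U|\le\mu\subs M$ together with $[M]^{<\mu}\subs M$ we get $\mc F_U\subs M$, hence $\mc F_U\subs\mc U\times\mc U$. Choosing $(V,W)\in\mc F_U$ with $x\in V$, the set $W\in\mc U$ is a $\tau^*$-open neighbourhood of $X\setm U$ disjoint from $V$, so $\overline V^{\tau^*}\subs U$. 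After the $T_0$-reflection this yields that $Y$ is regular.

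\emph{Dispersion character.} For any nonempty $U\in\mc U$, $|U|\ge\Delta(X)\ge\mu$, so elementarity supplies some $S\in[U]^\mu\cap M$, and by the closure of $M$ we have $S\subs M$ and $[S]^2\subs M$. For each pair $\{s,s'\}\in[S]^2$ a Hausdorff separator in $\tau_X$ exists, and by elementarity all of these separators may be chosen inside $M$, hence inside $\mc U$. Therefore members of $S$ are pairwise $\sim$-inequivalent, $|\pi(U)|\ge\mu$, and $\Delta(Y)\ge\mu$.

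\emph{Weight.} The upper bound $\we(Y)\le|\mc U|\le\mu$ is immediate. The matching lower bound $\we(Y)\ge\mu$, which I expect to be the main obstacle, is where $\we(X)\ge\mu$ is genuinely used. My plan is to build inside $M$ a right-separated sequence $\<x_\alpha:\alpha<\mu\>$ in $X$ together with neighbourhoods $V_\alpha\in\tau_X$ of $x_\alpha$ satisfying $V_\alpha\cap\{x_\beta:\beta<\alpha\}=\empt$; the existence of such a sequence should follow from the hypothesis $\mu\le\we(X)$ combined with regularity and $\operatorname{hL}(X)\le\mu$. By elementarity the construction lives inside $M$, so the $x_\alpha$ and the $V_\alpha$ end up in $\mc U$. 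Pushing forward under $\pi$, the image forms a weakly separated subspace of $Y$ of size $\mu$, giving $\operatorname R(Y)\ge\mu$ and hence $\we(Y)\ge\operatorname R(Y)\ge\mu$, as desired.
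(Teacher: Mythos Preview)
Your overall architecture — an elementary submodel $M$ of size $\mu$, the trace topology generated by $\tau_X\cap M$, and the $T_0$-quotient — is exactly the paper's. The treatment of regularity and of $\Delta(Y)\ge\mu$ is essentially the same too. There are, however, two real problems.

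\textbf{The model.} Your claim that an $M\prec H(\theta)$ with $|M|=\mu$ and $[M]^{<\mu}\subs M$ exists ``because $\mu$ is regular'' is false: such an $M$ exists only when $\mu^{<\mu}=\mu$. The paper asks only that $M$ be $<\mu$-\emph{covering} (every $B\in[M]^{<\mu}$ is contained in some $C\in[M]^{<\mu}\cap M$), which is available for every regular $\mu$ by the usual chain construction. Fortunately your arguments for regularity and dispersion do not actually use full closure: $\mc F_U\subs M$ follows simply from $\mc F_U\in M$, $|\mc F_U|\le\mu$, and $\mu+1\subs M$, and $[S]^2\subs M$ follows from $S\subs M$ together with closure of $M$ under pairs. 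So these parts survive, but you must weaken the hypothesis on $M$.

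\textbf{The weight lower bound.} This is where the real gap lies. Your plan is to produce inside $M$ a left-separated (you call it right-separated, but $V_\alpha\cap\{x_\beta:\beta<\alpha\}=\empt$ is left-separation) sequence of length $\mu$, push it down to $Y$, and conclude $\operatorname R(Y)\ge\mu$. The difficulty is the first step: from $\we(X)\ge\mu$, regularity, and $\operatorname{hL}(X)\le\mu$ alone there is no known ZFC route to a weakly separated subspace of size $\mu$. Indeed, whether $\operatorname R(X)=\omega$ forces $\operatorname{nw}(X)=\omega$ for regular $X$ is a long-standing open problem, so you cannot expect $\we(X)\ge\mu$ to hand you $\operatorname R(X)\ge\mu$. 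Your ``should follow'' is doing all the work and is not justified.

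The paper avoids this entirely. Instead of producing a large weakly separated set, it shows directly that no family $\mc G\in[\tau_X\cap M]^{<\mu}$ yields a base of $Y$: using the $<\mu$-covering property, enlarge $\mc G$ to some $\mc H\in[\tau_X\cap M]^{<\mu}\cap M$; since $\we(X)\ge\mu$, the family $\mc H$ is not a base of $X$, and by elementarity there are $x\in X\cap M$ and $V\in\tau_X\cap M$ witnessing this; then $[x]$ and $V/\!\sim$ witness that $\{H/\!\sim:H\in\mc H\}$ is not a base of $Y$. This is where the covering property is genuinely used, and it is the step your proposal is missing.
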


\begin{proof}
For every open set $U\subs X$ we let
\begin{align}\notag
 \mc G_U=\{V\in \tau_X: \overline V\subs U\}.
\end{align}
Since $X$ is regular we have $\bigcup\mc G_U=U $, and $\operatorname{hL}(X)\le {\mu}$ implies that
we can fix $\mc H_U\in \br \mc G_U;\le {\mu};$ with
$\bigcup\mc H_U=U $.

Let $\mc M$ be an elementary submodel of size ${\mu}$ of $H_\vartheta$ for a large enough
regular cardinal $\vartheta$
such that everything relevant belongs to $ \mc M$, ${\mu}+1\subs \mc M$, and $\mc M$ is $< \mu$-covering, i.e.
for each $B\in \br \mc M;<\mu;$
there is $C\in \br \mc M;<\mu;\cap \mc M$ with $B\subs C$.

For $x,y\in X$ let us put
\begin{align}\notag
 x\sim y\ \text{ iff }\ \forall U\in \mc M\cap \tau_X\ (x\in U
\Longleftrightarrow y\in U).
\end{align}
Then $\sim $ is clearly an equivalence relation on $X$.

\begin{claim}\label{cl:equiv}
If $x\not\sim y$ then there are disjoint open sets $U_x, \,U_y \in \mc M \cap \tau_X$
such that $x\in U_x$ and $y\in  U_y$.
\end{claim}
\begin{proof}[Proof of the Claim]
Assume that $U\in \mc M\cap \tau_X$ is such that $x\in U$ and
$y\notin U$. Then we have $\mc H_U\subs \mc M$
because $\mc H_U\in \mc M$ and  $|\mc H_U|\le \mu$.
We have $x\in V$ for some $V\in \mc M\cap \mc H_U$ and clearly $y \notin \overline V$
because $\overline{V}\subs U$.
Thus  $U_x = V$ and $U_y = X\setm \overline V$ are as required.
\end{proof}

Let $[x]$ denote the $\sim $-equivalence class of $x \in X$.
Using Claim \ref{cl:equiv} we can see then that
\begin{align}\label{eq:[x]}
 [x]=\bigcap\{U\in \mc M\cap \tau_X: x\in U\}.
\end{align}
It follows that if $U\in \mc M \cap \tau_X$ then
\begin{align}\label{eq:G}
 U=\bigcup\{[x]: x\in U\}.
\end{align}
Also, for every point $x \in X \cap \mc M$ we have $[x] = \{x\}$
because $\operatorname{hL}(X)\le \mu$ implies $\psi(x,X) \le \mu$.

Let us put
\begin{align}\notag
 Y=X/\sim\,\,=\,\{[x]:x\in X\}
\end{align}
and
\begin{align}\notag
 \mc B=\{U/\sim\,\,:\, U\in \mc M\cap \tau_X\}.
\end{align}
$\mc B$ is well-defined by (\ref{eq:G}) and it is clearly closed under finite intersections,
hence it is the base of a topology $\sigma$ on $Y$.
That this topology $\sigma$ is Hausdorff is immediate from claim \ref{cl:equiv}.
But it is also regular: Indeed, if $[x] \in U/\sim\,$ with $U\in \mc M\cap \tau_X$
then, as we have seen, there is $V\in \mc M\cap \mc H_U$ with $x \in V$. Now, it is
easy to see that then $[x] \in V/\sim\, \subs {\overline{V/\sim}\,}^\sigma \subs U/\sim\,$.

Let us next define the map $\varphi:X\to Y$ by the formula
\begin{align}\notag
 \varphi(x)=[x].
\end{align}
Then $\varphi$ is obviously a continuous surjection, hence $Y$
is  a regular continuous image of $X$.

\begin{claim}
$\Delta(Y)\ge {\mu}$.
\end{claim}

\begin{proof}
Let $U\in \mc M\cap \tau_X$ be non-empty.
Then${\mu}+1\subs \mc M$ and $|U|\ge{\mu}$ imply
$|U\cap \mc M|={\mu}$. But for every $x \in U\cap \mc M$ we have $[x] = \{x\}$,
hence
\begin{align}\notag
 |U/\sim|\ge |U\cap \mc M|={\mu},
\end{align}
completing the proof.
\end{proof}

\begin{claim}
$\we(Y)={\mu}$.
\end{claim}

\begin{proof}
Clearly  $\we(Y)\le |\mc B|={\mu}$.
Next, as $\mc M$ is $< \mu$-covering, for any $\mathcal{G} \in [\mc M \cap \tau_X]^{< \mu}$
there is $\mathcal{H} \in [\mc M \cap \tau_X]^{< \mu} \cap \mc M$
with $\mc G\subs \mc H$.  Then $\mc H$ is not a base of $X$ because
$\we(X) \ge \mu$,
so there are a point $x\in X$ and an open set $V$ containing $x$
such that for every $H\in \mc H$ with $x\in H$ we have $H\setm V \ne \empt$.
By elementarity we can then find to $\mathcal{H}$ such witnesses
$x$ and $V$ in $\mc M$ as well. But then
for each $H\in \mc H$ with $x\in H$ there is $y\in (H\setm V) \cap \mc M$,
hence $[y]\in (H/\sim)\setm (V/\sim)$.

This shows that
$\{H/\sim\,:\,H\in \mc H\}$ and consequently $\{G/\sim\,:\,G\in \mc G\}$
is not a base of $\sigma$. Since every member of $[\mc B]^{< \mu}$
is of the form $\{G/\sim\,:\,G\in \mc G\}$ for some $\mathcal{G} \in [\mc M \cap \tau_X]^{< \mu}$,
we conclude that no member of $[\mc B]^{< \mu}$ is a base for $\sigma$.
This implies $\we(Y) = \mu$ because it is known that any base of any space
has a subset which is a base and has cardinality equal to the weight of the space.
\end{proof}

This completes the proof of theorem \ref{tm:project}.
\end{proof}

We note that if the space $X$ in theorem \ref{tm:project} is assumed to be Tychonov
rather than regular then its continuous image $Y$ can also be chosen to be Tychonov.
In fact, in that case the proof is significantly simpler.

The following result gets pretty close to the affirmative solution of problem
\ref{pr:we}.

\begin{theorem}\label{tm:mindeltaw}
If $X$ is any regular space then
\begin{align}\notag
\min\{\Delta(X), \we(X)\}\le 2^{\pind(X)}.
\end{align}
\end{theorem}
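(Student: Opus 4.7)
My plan is to argue by contradiction: suppose $\min\{\Delta(X),\we(X)\}>\kappa:=2^{\pind(X)}$, so both $\Delta(X)$ and $\we(X)$ exceed $\kappa$. The strategy is to produce a regular continuous image $Y$ of a suitable open subspace $U$ of $X$ with $\we(Y)=\mu$, $\Delta(Y)\ge\mu$, and $\pind(Y)\le\pind(X)$ for some regular cardinal $\mu>\kappa$. Once such a $Y$ is in hand, the inequalities $\pi(Y)\le\we(Y)\le\Delta(Y)$ let me invoke Lemma \ref{lm:piDelta} to conclude $\pind(Y)=\den(Y)$; the standard bound $\we(Y)\le 2^{\den(Y)}$ for regular spaces (invoked already in the proof of Lemma \ref{lm:Rle2pd}) then yields $\mu=\we(Y)\le 2^{\pind(Y)}\le 2^{\pind(X)}=\kappa$, contradicting $\mu>\kappa$.

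The case $\we(X)\le\Delta(X)$ disposes of itself without Theorem \ref{tm:project}: one has $\pi(X)\le\we(X)\le\Delta(X)$, so Lemma \ref{lm:piDelta} gives $\pind(X)=\den(X)$ directly, and hence $\we(X)\le 2^{\den(X)}=\kappa$, a contradiction. I may therefore assume $\Delta(X)<\we(X)$, so $\min\{\Delta(X),\we(X)\}=\Delta(X)$.

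Next I will locate a nonempty open $U\subs X$ with $|U|=\Delta(X)$ (which makes $U$ neat, $\Delta(U)=|U|$) and $\we(U)>\kappa$. Pick a maximal disjoint family $\mc U$ of opens of size $\Delta(X)$; by maximality (and since every nonempty open of $X$ contains one of size $\Delta(X)$), $\bigcup\mc U$ is dense, and $|\mc U|\le c(X)\le\pind(X)\le\kappa$. If every $U\in\mc U$ had $\we(U)\le\kappa$, then $\pi(X)=\pi(\bigcup\mc U)\le\sum_{U\in\mc U}\pi(U)\le|\mc U|\cdot\kappa\le\kappa<\Delta(X)$, whence Lemma \ref{lm:piDelta} together with $\we(X)\le 2^{\den(X)}$ would again give $\we(X)\le\kappa$, a contradiction. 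So some $U\in\mc U$ has $\we(U)>\kappa$.

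For this neat $U$ the crucial point is that $\operatorname{hL}(U)\le|U|=\Delta(U)$ (since $L(Z)\le|Z|$ for every infinite space $Z$), and of course $\operatorname{hL}(U)\le\we(U)$; hence $\operatorname{hL}(U)\le\min\{\Delta(U),\we(U)\}$, which is exactly what is needed to apply Theorem \ref{tm:project} to $U$. Since both $\Delta(U)$ and $\we(U)$ exceed $\kappa$, I can choose a regular $\mu$ with $\kappa<\mu\le\min\{\Delta(U),\we(U)\}$ and $\operatorname{hL}(U)\le\mu$ -- taking $\mu=\kappa^+$ when $\operatorname{hL}(U)\le\kappa$, and the least regular cardinal $\ge\operatorname{hL}(U)$ otherwise -- after which Theorem \ref{tm:project} applied to $U$ produces the desired $Y$ and the first paragraph closes the contradiction. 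The only delicate point I anticipate is the borderline situation in which $\operatorname{hL}(U)=\min\{\Delta(U),\we(U)\}$ happens to be singular, so that no regular $\mu$ sits in the target interval; this edge case can be sidestepped by a small coarsening of $U$ in the style of Lemma \ref{lm:Rle2pd}, which brings $\we(U)$ (and hence $\operatorname{hL}(U)$) below a convenient regular cardinal without increasing $\pind(U)$ or decreasing $\Delta(U)$.
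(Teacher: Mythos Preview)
Your overall strategy matches the paper's: assume the inequality fails, locate a neat open subspace to which Theorem~\ref{tm:project} applies with some regular $\mu>2^{\pind(X)}$, and derive a contradiction via Lemma~\ref{lm:piDelta} and $\we(Y)\le 2^{\den(Y)}$. The structural difference---you case-split on whether $\we(X)\le\Delta(X)$ and then use a maximal disjoint family to find $U$ with $\we(U)>\kappa$, whereas the paper first deduces $\pind(X)<\den(X)$ and then picks $G$ in the complement of $\overline{\bigcup\{G:\we(G)\le\Delta(G)\}}$---is cosmetic.

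The genuine gap is your handling of the ``edge case''. Your proposed coarsening fix does not work as described: if you coarsen $U$ to a regular $U^*$ with $\we(U^*)\le|U|=\Delta(U^*)$, then Lemma~\ref{lm:piDelta} forces $\pind(U^*)=\den(U^*)$ and hence $\we(U^*)\le 2^{\pind(U^*)}\le\kappa$, so you can no longer choose $\mu>\kappa$ with $\mu\le\we(U^*)$, and Theorem~\ref{tm:project} is unavailable. The paper avoids the edge case entirely by invoking Lemma~\ref{lm:Rle2pd} \emph{before} applying Theorem~\ref{tm:project}: since your $U$ is neat and regular, Lemma~\ref{lm:Rle2pd} gives $\operatorname{hL}(U)\le\operatorname{R}(U)\le 2^{\pind(U)}\le\kappa$ outright, so $\mu=\kappa^+$ satisfies $\operatorname{hL}(U)\le\mu\le\min\{\Delta(U),\we(U)\}$ with no case analysis needed. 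You cite Lemma~\ref{lm:Rle2pd} only as inspiration for a workaround, when in fact its conclusion is exactly the missing ingredient.
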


\begin{proof}
Our proof is indirect, so we assume that
\begin{align}\notag
\min\{\Delta(X), \we(X)\}>2^{\pind(X)}.
\end{align}
Then from $2^{\pind(X)} < \we(X)\le 2^{\den(X)}$, we get
\begin{align}\label{ine:pindden}
\pind(X)<\den(X).
\end{align}

Let us consider the family
\begin{align}\notag
\mc G=\{G\in \tau_X: \we(G)\le \Delta(G)\},
\end{align}
then for all $G\in \mc G$ we have
$\den(G)=\pind(G)$ by Lemma \ref{lm:piDelta}.
If $\mc H\subs \mc G$ is a maximal disjoint subfamily of $\mc G$
then we have
\begin{align}\notag
 \den(\overline{\bigcup\mc G})=
\den(\overline{\bigcup\mc H}) =
\den({\bigcup\mc H})
= \sum_{H\in \mc H}\den(H)\le \operatorname{c}(X)\cdot\pind(X)=\pind(X).
\end{align}
But then $\pind(X)<\den(X)$ implies $X\setm \overline{\bigcup \mc G}\ne \empt$,
hence we may choose a {\em neat} non-empty open subset $G\subs X\setm \overline{\bigcup \mc G}$.
Then we have
\begin{align}
\we(G) > \Delta(G)\ge \Delta(X)>2^{\pind(X)}\ge
2^{\pind(G)}.
\end{align}

Since $G$ is regular and neat, we may apply lemma \ref{lm:Rle2pd}
to conclude that $\operatorname{hL}(G)\le \operatorname{R}(G)\le 2^{\pind(G)}$.
Thus we may apply theorem \ref{tm:project} to $G$ with $\mu=(2^{\pind(G)})^+$
to obtain a regular continuous image $Y$ of $G$ such that
$\Delta(Y)\ge \we(Y)= \mu=(2^{\pind(G)})^+$. But then,
by lemma \ref{lm:piDelta}, we have $\pind(Y)=\den(Y)$ and hence $\we(Y)\le 2^{\den(Y)}=2^{\pind(Y)}$.
Since $Y$ is a continuous image of $G$, by proposition \ref{prop:im} we also have
$\pind(Y)\le \pind(G)$. So on one hand we have $\we(Y) \le 2^{\pind(G)}$,
while on the other hand $\we(Y) = (2^{\pind(G)})^+$.
This blatant contradiction completes our proof.
\end{proof}

Now we can present a strengthened version of lemma \ref{lm:dle2pd}.

\begin{theorem}\label{tm:dless2pd}
For every regular space $X$ we have $\den(X)<2^{\pind(X)}$.
\end{theorem}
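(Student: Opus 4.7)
The plan is to proceed by contradiction. Suppose $\den(X)\ge 2^{\pind(X)}$; then by Lemma \ref{lm:dle2pd} we must have equality, so set $\kappa = \den(X) = 2^{\pind(X)}$, noting that $\pind(X) < \den(X)$ since $\pind(X) < 2^{\pind(X)}$. The strategy is to locate a neat open subspace $U_0 \subs X$ with $\den(U_0) = \kappa$ and then contradict an intermediate claim that rules out such a subspace via Theorem \ref{tm:mindeltaw}.

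First I would record the auxiliary claim: for any regular space $Y$ with $\pind(Y) < \den(Y)$ one has $\Delta(Y) < 2^{\pind(Y)}$. If instead $\Delta(Y) \ge 2^{\pind(Y)}$, then Theorem \ref{tm:mindeltaw} forces $\we(Y) \le 2^{\pind(Y)} \le \Delta(Y)$, whence $\pi(Y) \le \we(Y) \le \Delta(Y)$, and Lemma \ref{lm:piDelta} yields $\pind(Y) = \den(Y)$, contradicting the hypothesis.

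Next I would produce $U_0$. Let $\mathcal{U}$ be a maximal pairwise disjoint family of neat open subsets of $X$; then $\bigcup \mathcal{U}$ is dense open in $X$, so $\sum_{U \in \mathcal{U}} \den(U) \ge \den(X) = \kappa$. By K\"onig's lemma, $\cf(\kappa) = \cf(2^{\pind(X)}) > \pind(X) \ge c(X) \ge |\mathcal{U}|$. If every $\den(U)$ were strictly less than $\kappa$, then $\{\den(U) : U \in \mathcal{U}\}$ could not be cofinal in $\kappa$ (as $|\mathcal{U}| < \cf(\kappa)$), so this set would be bounded below $\kappa$ by some $\sigma < \kappa$, giving $\sum_{U \in \mathcal{U}} \den(U) \le |\mathcal{U}| \cdot \sigma < \kappa$, a contradiction. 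Hence some $U_0 \in \mathcal{U}$ has $\den(U_0) \ge \kappa$, and $\den(U_0) \le \den(X) = \kappa$ yields $\den(U_0) = \kappa$.

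Finally, $U_0$ is a neat open subspace of $X$, hence regular, with $\pind(U_0) \le \pind(X) < \kappa = \den(U_0)$; in particular $\pind(U_0) < \den(U_0)$. Applying the auxiliary claim to $U_0$ gives $|U_0| = \Delta(U_0) < 2^{\pind(U_0)} \le 2^{\pind(X)} = \kappa$, and therefore $\den(U_0) \le |U_0| < \kappa$, contradicting $\den(U_0) = \kappa$. The main delicate point is the cardinal-arithmetic step in locating $U_0$: we need K\"onig's lemma to guarantee $\cf(\kappa) > |\mathcal{U}|$, so that the density $\kappa$ of $\bigcup \mathcal{U}$ must be concentrated on a single piece rather than dispersed across the family.
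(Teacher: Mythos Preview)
Your overall strategy matches the paper's: reduce to a neat open $U_0$ with $\den(U_0)=\kappa=2^{\pind(X)}$ and then derive a contradiction via Theorem~\ref{tm:mindeltaw} and Lemma~\ref{lm:piDelta}. However, your proof of the auxiliary claim has a genuine gap. From $\Delta(Y)\ge 2^{\pind(Y)}$ and Theorem~\ref{tm:mindeltaw} (which only says $\min\{\Delta(Y),\we(Y)\}\le 2^{\pind(Y)}$) you \emph{cannot} conclude $\we(Y)\le 2^{\pind(Y)}$: if $\Delta(Y)=2^{\pind(Y)}$ exactly, the minimum is already $\Delta(Y)$ and nothing is said about $\we(Y)$. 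And this boundary case is live for your $U_0$: you know $\Delta(U_0)\ge\kappa=2^{\pind(X)}\ge 2^{\pind(U_0)}$, but it is entirely possible that $\pind(U_0)=\pind(X)$ and $|U_0|=\Delta(U_0)=\kappa$, so that $\Delta(U_0)=2^{\pind(U_0)}$ on the nose.

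The paper's proof treats this case separately: once one shows $|U_0|=\Delta(U_0)=2^{\pind(X)}$, the equation $(2^{\pind(X)})^{\pind(X)}=2^{\pind(X)}$ gives $\cf\big([\,|U_0|\,]^{\pind(X)},\subset\big)=|U_0|$, so Lemma~\ref{lm:cfkappalambda1} applied with $\lambda=\pind(X)^+$ yields $\pind(U_0)\ge\pind(X)^+>\pind(X)$, the desired contradiction. (Your auxiliary claim is in fact the content of Theorem~\ref{tm:small_or_eq}, whose own proof in the paper splits into the same two cases and handles the equality case with Lemma~\ref{lm:cfkappalambda1}.) Adding this missing case analysis to your argument repairs it and makes it essentially identical to the paper's proof.
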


\begin{proof}
Assume, on the contrary, that $\den(X)\ge 2^{\pind(X)}$.
Then, by Lemma \ref{lm:dle2pd},
we actually have $\den(X)=2^{\pind(X)}$.

Let us put
\begin{align}\notag
\mc G=\{G\in \tau_X: \den(G)<2^{\pind(X)}\}
\end{align}
and $\mc H\subs \mc G$ be a maximal disjoint subfamily.
Then we have
\begin{align}\notag
 \den(\overline{\bigcup\mc G})=
\den(\overline{\bigcup\mc H}) =
\den({\bigcup\mc H})
= \sum_{H\in \mc H}\den(H) < 2^{\pind(X)} = \den(X)
\end{align}
because $|\mc H| \le \operatorname{c}(X)\le \pind(X)$
and $\cf(2^{\pind(X)})>\pind(X)$.

Thus $X\setm \overline{\bigcup \mc G}\ne \empt$
because $\den(X)=2^{\pind(X)}$, and so it has
a neat non-empty open subset $G$. Clearly, then $\den(G)=2^{\pind(X)}$,
hence $|G| = \Delta(G) \ge 2^{\pind(X)}$.
But $\Delta(G)>2^{\pind(X)} \ge 2^{\pind(G)}$ would imply $$\we(G)\le 2^{\pind(G)} \le 2^{\pind(X)} \le \Delta(G)$$
by Theorem \ref{tm:mindeltaw},  hence
$\den(G)=\pind(G)$ by lemma \ref{lm:piDelta},
which clearly contradicts $\den(G)=2^{\pind(X)}$.
Consequently, we have $|G| = \Delta(G)=2^{\pind(X)}$.

Because of $(2^{\pind(X)})^{\pind(X)} =2^{\pind(X)}$, however,
we can apply  lemma \ref{lm:cfkappalambda1} to the neat space $G$
with $\kappa=2^{\pind(X)}$ and $\lambda=\pind(X)^+$
to conclude that $\pind(G) \ge \lambda =\pind(X)^+$, which is again a contradiction.
\end{proof}

Our final result may be considered as the analogue of theorem \ref{tm:dless2pd} for regular
rather than just Hausdorff spaces.

\begin{theorem}\label{tm:small_or_eq}
If $X$ is any regular space such that $\pind(X)<\den(X)$
then  $\Delta(X)<2^{\pind(X)}$.
\end{theorem}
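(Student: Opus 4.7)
My plan is to argue by contradiction, assuming $\Delta(X) \ge 2^{\pind(X)}$, and derive a contradiction with the hypothesis $\pind(X) < \den(X)$. The strategy closely parallels the proof of Theorem \ref{tm:dless2pd}, but starts by passing to a neat open subspace via Lemma \ref{lm:ex2cardhomex1}, which circumvents the density decomposition used there.

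First I would invoke Lemma \ref{lm:ex2cardhomex1} to obtain a neat open subspace $Y \subseteq X$ with $\pind(Y) < \den(Y)$. Since $Y$ is open in $X$, every non-empty open subset of $Y$ is automatically open in $X$, so $\Delta(Y) \ge \Delta(X)$; combined with $\pind(Y) \le \pind(X)$ from Proposition \ref{prop:im}, this produces the chain of inequalities $\Delta(Y) \ge \Delta(X) \ge 2^{\pind(X)} \ge 2^{\pind(Y)}$.

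Next I would apply Theorem \ref{tm:mindeltaw} to $Y$, which yields $\min\{\Delta(Y), \we(Y)\} \le 2^{\pind(Y)}$. A strict inequality $\Delta(Y) > 2^{\pind(Y)}$ would force $\we(Y) \le 2^{\pind(Y)} < \Delta(Y)$, so $\pi(Y) \le \we(Y) \le \Delta(Y)$, and Lemma \ref{lm:piDelta} would give $\pind(Y) = \den(Y)$, contradicting the choice of $Y$. Hence $|Y| = \Delta(Y) = 2^{\pind(Y)}$.

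To finish I would apply Lemma \ref{lm:cfkappalambda1} to $Y$ with $\kappa = |Y|$, $\lambda = \pind(Y)^+ \le \den(Y)$, and the family $\mc A = [\kappa]^{\pind(Y)}$. The cardinal arithmetic identity $(2^{\pind(Y)})^{\pind(Y)} = 2^{\pind(Y)}$ gives $|\mc A| = \kappa$, while every subset of $\kappa$ of size at most $\pind(Y)$ is trivially contained in some member of $\mc A$. The lemma then yields $\pind(Y) \ge \pind(Y)^+$, the desired contradiction. There is no serious obstacle here; the only mildly subtle point is the combined use of Theorem \ref{tm:mindeltaw} and Lemma \ref{lm:piDelta} to pin $\Delta(Y)$ to exactly $2^{\pind(Y)}$, since it is this equality that makes the closing cardinal arithmetic go through.
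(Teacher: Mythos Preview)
Your proof is correct and follows essentially the same route as the paper's own argument: both use Theorem~\ref{tm:mindeltaw} to split into the cases $\Delta > 2^{\pind}$ and $\Delta = 2^{\pind}$, handle the first via Lemma~\ref{lm:piDelta} and the second via Lemma~\ref{lm:cfkappalambda1} with $\lambda = \pind^{+}$, and invoke Lemma~\ref{lm:ex2cardhomex1} to connect the neat case to the general one. The only cosmetic difference is that the paper proves the contrapositive by showing every neat open $G$ satisfies $\pind(G)=\den(G)$ and then cites Lemma~\ref{lm:ex2cardhomex1} at the end, whereas you invoke Lemma~\ref{lm:ex2cardhomex1} at the outset to pass to a single neat $Y$ and then derive a contradiction.
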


\begin{proof}
We prove the contrapositive of this statement:
Assume that $X$ is regular and $\Delta(X)\ge 2^{\pind(X)}$.
Then for any non-empty open subset of $G \subs X$ we have
$$\Delta(G)\ge \Delta(X)\ge 2^{\pind(X)}\ge 2^{\pind(G)}.$$

Now, if $\Delta(G)> 2^{\pind(G)}$ then we have $\we(G)\le 2^{\pind(G)}$
by Theorem \ref{tm:mindeltaw}, and so $\we(G)<\Delta(G)$
which implies $\pind(G)=\den(G)$ by Lemma \ref{lm:piDelta}.

Otherwise  $\Delta(G)= 2^{\pind(G)}$, hence if $G$ is also neat then, as above,
we can apply  lemma \ref{lm:cfkappalambda1} for $G$
with $\kappa=2^{\pind(G)}$ and $\lambda=\pind(G)^+$ to conclude that $\pind(G) = \den(G)$.

This, of course, implies that $\pind(G) = \den(G)$ holds for all neat open $G \subs X$,
consequently $\pind(X) = \den(X)$ by lemma \ref{lm:ex2cardhomex1}.
\end{proof}

\end{document}